\documentclass[a4paper,12pt]{article}

 \topmargin= 0cm 
 \marginparwidth= 0cm
 \textwidth= 15.9cm
\textheight= 22cm
\oddsidemargin=0cm
\evensidemargin=0in

\usepackage{mathptmx}      

\usepackage{latexsym}
\usepackage[english]{babel}
\usepackage{amsmath}
\usepackage{amsfonts}
\usepackage{amssymb}
\usepackage{amsthm}
\usepackage{mathrsfs} 
\usepackage{graphicx}
\usepackage[all]{xy}

\usepackage{caption} 

\usepackage{cite} 

\usepackage{microtype}
\usepackage[super]{nth}

\usepackage[usenames,dvipsnames]{xcolor} 
\usepackage{enumerate}

 \usepackage{hyperref}
 \hypersetup{
     bookmarks=true,         
     bookmarksopen=true,    %
     bookmarksopenlevel=1, %
     pdftoolbar=true,        
     pdfmenubar=true,        
     pdffitwindow=true,     
     pdfstartview={FitH},    
     pdftitle={Elimination of extremal index zeroes from generic paths of closed \texorpdfstring{$1$}{1}-forms},    
     pdfauthor={Carlos Moraga Ferr\'{a}ndiz},     
     pdfkeywords={Morse-Novikov theory, Cerf theory, closed \texorpdfstring{$1$}{1}-forms, pseudo-gradients, \texorpdfstring{$s$}{s}-cobordism}, 
     pdfnewwindow=true,      
    colorlinks=true,       
     linkcolor=Emerald,          
     citecolor=OliveGreen,        
     filecolor=magenta,      
     urlcolor=Bittersweet           
 }

\newtheorem{Teo}{Theorem}
\newtheorem{definition}[Teo]{Definition}
\newtheorem{proposition}[Teo]{Proposition}

\newtheorem{lemma}[Teo]{Lemma}
\newtheorem{nota}[Teo]{Notation}
\newtheorem{example}[Teo]{Example}
\newtheorem{remark}[Teo]{Remark}




\DeclareMathOperator{\Bas}{Bas}

\DeclareMathOperator{\Crit}{Crit}
\DeclareMathOperator{\Diff}{Diff}

\DeclareMathOperator{\Gra}{Gr}

\DeclareMathOperator{\Id}{Id}
\DeclareMathOperator{\Int}{int}

\DeclareMathOperator{\im}{Im} 
\DeclareMathOperator{\ind}{ind}
\DeclareMathOperator{\lat}{lat}

\DeclareMathOperator{\loc}{loc}
\DeclareMathOperator{\mo}{mod}

\DeclareMathOperator{\rk}{rk}

\DeclareMathOperator{\supp}{supp}


\newcommand{\bD}{\mathbb{D}}

\newcommand{\bR}{\mathbb{R}}
\newcommand{\bS}{\mathbb{S}}

\newcommand{\bZ}{\mathbb{Z}}


\newcommand{\cC}{\mathcal{C}}

\newcommand{\cF}{\mathcal{F}}

\newcommand{\cL}{\mathcal{L}}
\newcommand{\cM}{\mathcal{M}}

\newcommand{\cR}{\mathcal{R}}
\newcommand{\cS}{\mathcal{S}}
\newcommand{\cT}{\mathcal{T}}



\newcommand{\al}{\alpha}
\newcommand{\be}{\beta}
\newcommand{\de}{\delta}
\newcommand{\ga}{\gamma}

\newcommand{\lam}{\lambda}

\newcommand{\ve}{\varepsilon}
\newcommand{\vp}{\varphi}
\newcommand{\Lam}{\Lambda}


\newcommand{\wt}{\widetilde}




\newcommand{\rra}{\longrightarrow}

\newcommand{\adh}{\overline}
\newcommand{\co}{\circ}

\newcommand{\gli}{\boldsymbol{\ell}}

\newcommand{\p}{\partial}

\newcommand{\prive}{\smallsetminus}
\newcommand{\trans}{\pitchfork}

\newcommand{\x}{\times}
\newcommand{\V}{\varnothing}

\newcommand{\di}{\displaystyle}




\newcommand{\esp}{\,\,\,}
\newcommand{\miniesp}{\,\,}

\newcommand{\neee}{\negmedspace}


\newcommand{\champs}{(\xi_t)_{\param}}

\newcommand{\formes}{(\al_t)_{\param}}


\newcommand{\param}{ t\in [0,1] }
\newcommand{\primits}{(h_t)_{\param}}


\providecommand{\abs}[1]{\lvert#1\rvert}





\providecommand{\flecur}[1]{\ar@/^#1pc/ }


\providecommand{\flee}[1]{\di \mathop{\rra}^{#1}}


\providecommand{\norm}[1]{\lVert#1\rVert}


\providecommand{\parent}[1]{\left( #1 \right) }

\providecommand{\bparent}[1]{\bigl( #1 \bigr) }

\providecommand{\sing}[1]{\left\lbrace #1\right\rbrace }




\newcommand{\ch}[2]{ (#1)_{#2 \in [0,1]} } 

\newcommand{\ens}[2]{ \left\lbrace  #1 \ \middle| \ #2 \right\rbrace }




\newcommand{\scal}[2]{ \langle  #1, #2 \rangle } 


\newcommand{\quotient}[2]{\raise1ex\hbox{$#1$}\Big/ \lower1ex\hbox{$#2$}}



\newcommand{\ensdefTres}[3]{ #1 :=  \left\lbrace  #2 \ \middle| \ #3 \right\rbrace } 








\newcommand{\chext}[4]{ (#1)_{#2 \in [#3,#4]} }




\newcommand{\ensCuatro}[4]{ #1 =  \left\lbrace  #2 \ \middle| \  \begin{array}{l}
#3 \\
#4 
\end{array} \right\rbrace } 



\newcommand{\lift}[4]{\[ 
   \xymatrix{
  & \wt{#2} \ar[d]^{#4}  \\
#1 \ar[r]^#3 \ar@{.>}[ur]^{\wt{#3}} & #2
} 
\]}









\newcommand{\aplic}[5]{
\begin{array}{rccc}
#1 : & #2 & \rra & #3 \\
  &  #4 & \longmapsto & #5
\end{array} }

\newcommand{\applic}[5]{
$$\begin{array}{rccc}
#1 : & #2 & \rra & #3 \\
  &  #4 & \longmapsto & #5
\end{array}$$ }

\newcommand{\cuadr}[8]{
 \xymatrix{
 #1 \ar[r]^{#5}\ar[d]_{#6} & #2 \ar[d]^{#7} \\
 #3 \ar[r]^{#8} & #4
}
 }

  \newcommand{\sistIVU}[8]{ \left\{
  \begin{array}{ll}
       #1 &  #2 \\
        #3 &  #4 \\
  #5 &  #6 \\
 #7 &  #8
  \end{array}
  \right. }



\newcommand{\univcolim}[9]{
\[ \xymatrix{
 #1 \ar[rr]^{#4}_{#7}\ar[rd]^{#5}_{#8} & & #2 \\
  & #3 \ar@{.>}[ur]^{#6}_{#9} &
}
\] }








\newcommand{\DibLocScalNomEtiq}[5]{
\begin{figure}[#2]
\centering
\includegraphics[scale=#3]{#1}
\caption{#4}\label{#5}
\end{figure}
}




\newcommand{\asc}[1]{
 W^s(#1)
  }

\newcommand{\desc}[1]{
 W^u(#1)
  }

\newcommand{\binap}[1]{
 W^{s/u}(#1)
  }

\newcommand{\ascT}[2]{
 W^s_{#2}(#1)
  } 

\newcommand{\descT}[2]{
 W^u_{#2}(#1)
  }

\begin{document}
\title{Elimination of extremal index zeroes from generic paths of closed \texorpdfstring{$1$}{1}-forms}

\author{C. Moraga Ferr\'{a}ndiz \thanks{e-mail:  carlos@ms.u-tokyo.ac.jp\newline
  2000 Mathematics Subject Classification: 58A10, 57R52, 57R45, 57R30.\newline
  Keywords: Morse-Novikov theory, Cerf theory, closed $1$-forms, pseudo-gradients, pseudo-isotopy, $ s $-cobordism
 }}

\maketitle
\begin{abstract}
Let $ \al $ be a Morse closed $ 1 $-form of a smooth $ n $-dimensional manifold $ M $. The zeroes of $ \al $ of index $ 0 $ or $ n $ are called \emph{centers}. It is known that every non-vanishing de Rham cohomology class $ u $ contains a Morse representative without centers. The result of this paper is the one-parameter analogue of the last statement: every generic path $ \formes $ of closed $ 1 $-forms in a fixed class $ u\neq 0 $ such that $ \al_0,\al_1 $ have no centers, can be modified relatively to its extremities to another such path $ \ch{\be_t}{t} $ having no center at all.
\end{abstract}

\section{Introduction and main result}
Let $M$ be a closed smooth manifold of dimension $n$ and $u$ be a non-zero de Rham cohomology class of degree $1$ of $M$. We are considering $\formes$ a path of closed $1$-forms where $[\al_t]=u$ for every $ t $. Generically, such a path only consists of Morse $1$-forms but in a finite set of times $\sing{t_i}_{i=1}^s$ where the path crosses transversely the co-dimension one strata of birth/elimination closed $1$-forms. Namely, such an $\al_{t_i}$ presents a unique degenerate zero $p\in M$ such that for some coordinate chart $U$ of $p$ and every $t$ near to $t_i$ we have: \[ \al_{t}|_U=d\parent{x_1^3\mp(t-t_i)x_1+Q(x_2,\ldots,x_{n})}. \] Here $Q$ is a non-degenerate quadratic form in the last $n-1$ variables. The birth/elimination strata are naturally co-oriented: we will say that $ \al_{t_i} $ is of birth-type if the orientation induced by the path represents the fixed co-orientation of the strata; it corresponds to the ``minus'' case in the last formula, where a pair of Morse critical zeroes of consecutive index appears as $ t $ increases over $ t_i $.\\
The genericness of  these properties for a path is a direct consequence of Thom's transversality theorem in jet spaces (\neee\cite{thomSing} or \cite{gui+gol} for a more educational presentation), as it was shown by Cerf in \cite{cerfSphere} for the case $u=0$ (see also \cite{laudenbachMorseFloer}).\\

Among the zeroes of a Morse closed $1$-form $\al$, that we denote by $ Z(\al) $, those of extremal index are called \emph{centers}. 
The idea of cancelling pairs of critical points of consecutive index which are connected by a unique (pseudo)gradient trajectory, so much exploited in proving Smale's theorem of $h$-cobordism (see \cite{milnorHcob}), is used in the paper \cite{arnoux+levitt} to construct $\al'$, a Morse closed $1$-form cohomologous to $\al$ which does not have any center. Their only restriction is to ask the cohomology class $u$ not to be rational ($\rk(u)>1$). Latour gives a more topological proof without any requirement on $u\neq 0$ in \cite{latour}. We deal with the one-parameter version of this result and obtain the following theorem:

\begin{Teo}\label{th:ElimParam}
Let $ \dim(M)\geq 3 $. Every generic path of cohomologous closed $ 1 $-forms $ \formes $  such that $ \al_0,\al_1 $ have no centers can be deformed into a path $ \ch{\be_t}{t} $ of the same kind which has the same extremities and no center at all.
\end{Teo}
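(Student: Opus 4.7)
The plan is to extend the zero-parameter result of Latour to a one-parameter family, inducting on the number of ``center strands'' of the path. By \emph{graph of zeroes} I mean the set $G=\{(p,t)\tq p\in Z(\al_t)\}\subset M\x [0,1]$, which under the genericity assumption is a finite disjoint union of smooth arcs (\emph{strands}) of constant Morse index, meeting transversally in pairs at each birth or elimination time. A \emph{center strand} is a strand of index $0$ or $n$; since $\al_0$ and $\al_1$ have no centers, every such strand is born at some $t_b>0$ and dies at some $t_e<1$, paired at birth (resp.\ death) with a strand of index $1$ (resp.\ $n-1$). Replacing $\al_t$ by $-\al_t$ if necessary, it suffices to remove index-$0$ strands one at a time.

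Fix one index-$0$ center strand $\Gamma$ with lifetime $[t_b,t_e]$. The core of the argument is to exhibit, on a slightly enlarged interval $[t_b-\ve,t_e+\ve]$, an index-$1$ strand $q(t)$ adjacent to $\Gamma$ at both endpoints, together with a smooth family of pseudo-gradients $\xi_t$ of $\al_t$, such that, once $\al_t$ is lifted to a suitable abelian cover of $M$ with primitive $h_t$, there is a unique $\xi_t$-trajectory from a lift $\wt{q}(t)$ to a lift $\wt{\Gamma}(t)$ for every $t$ in this interval. Once this \emph{parametric Smale condition} is met, a one-parameter version of the Smale--Morse cancellation lemma (in the spirit of the Cerf/Hatcher--Wagoner parametric arguments) produces a deformation, supported in a neighbourhood of the trajectory family, which simultaneously cancels $\Gamma$ and $q$, leaving all other strands (and the endpoints $\al_0,\al_1$) untouched.

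The main technical step is the construction of the pair $(q(t),\xi_t)$. At $t_b$ and $t_e$ the birth/elimination partners of $\Gamma$ provide the natural candidates, and Latour's method gives the unique trajectory at each fixed time. To interpolate when the birth partner $\Gamma_1^b$ and the death partner $\Gamma_1^e$ do not coincide, I would proceed fibrewise: for generic $t$, the basin of attraction of $\Gamma(t)$ in the cover is an open region whose topological boundary is built from unstable manifolds of index-$1$ points, and a family-transversality argument provides a finite list of ``obstruction'' times where the candidate trajectory degenerates (a broken trajectory through a second index-$1$ point, a disconnection, or an escape in the cover). I plan to resolve each such obstruction by a local modification of $\xi_t$, and, when this is not enough, by an auxiliary birth/elimination of an index-$(1,2)$ pair acting as a handle-slide that redirects the partner across the obstruction without creating new centers.

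I expect the main obstacle to lie in this handle-slide step: the auxiliary pair must be inserted without affecting already-treated center strands nor the prescribed endpoints, and its own gradient configuration must be controlled globally on the cover. This is where the hypothesis $n=\dim M\geq 3$ enters, as it provides the codimension needed to perturb trajectories to be disjoint in both the ambient manifold and its cover (the one-parameter analogue of the Whitney-type argument used in \cite{latour} and \cite{arnoux+levitt}). Once a single center strand has been removed, the argument is applied again to the remaining ones, and the induction terminates in finitely many steps since the path has only finitely many bifurcations.
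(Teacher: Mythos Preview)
Your outline points in the right direction and shares the paper's broad strategy---remove one sink strand at a time, and use auxiliary $(1,2)$ births to get past obstructions---but it is a plan rather than a proof, and the missing ingredients are precisely the ones the paper supplies.

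The central gap is your selection of the partner $q(t)$. You ask for a single index-$1$ strand adjacent to $\Gamma$ at both endpoints with a unique connecting trajectory throughout, and propose to repair obstructions by unspecified handle-slides. The paper shows that this is too optimistic as stated and gives a concrete mechanism instead. It introduces the \emph{connecting saddle} of the sink $c_t$: among the normal saddles in $\cS_{c_t}^n$, the one whose connecting orbit $\ell_t^+$ has minimal transverse length. This choice is canonical and (after arranging $0$-excellence and $K$-transversality) piecewise continuous in $t$, failing only at a finite set $\cR$. The failures are then \emph{classified}: either a \emph{competition time} (two normal saddles tie for shortest $\ell_t^+$) or a \emph{$K$-sliding time} (the connecting orbit degenerates through a saddle of type $\ker$). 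Your ``finite list of obstruction times'' is exactly this set, but you do not identify the two types or say how each is handled.

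The paper's resolution of each type is specific and not captured by ``local modification of $\xi_t$'' or a generic handle-slide. A $K$-sliding time is converted into two competition times by inserting a $(1,2)$ pair (Proposition~\ref{pro:KSlideToCompet}) whose new saddle temporarily becomes the connecting one; this is where $n\ge 3$ is used, simply so that the index-$2$ zero is not a source (not a Whitney-type disjunction argument). A competition time is then removed by a different trick: one inserts an elimination/re-birth loop at the sink just past the competition, and applies the \emph{elementary swallow-tail lemma} (Lemma~\ref{lem:SwallowTail}) to the triple $(s_t,s'_t,c_t)$, which merges the two competing saddle strands and drops the competition count by one. Only once all competitions are gone does one have the single continuous partner you postulate, and then the \emph{elementary lips lemma} (Lemma~\ref{lem:lips}) performs the parametric cancellation you call the ``parametric Smale condition''. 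In short, your endpoint is the paper's $r=0$ base case; what is missing from your proposal is the length-minimizing selection principle and the swallow-tail reduction that get you there.
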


A natural way to distinguish isotopy classes of non-singular closed $1$-forms in the same cohomology class, is to study deformations of paths of cohomologous  $1$-forms relative to their extremities, the latter being non-singular. One expects to obtain a $K$-theoretic algebraic obstruction to isotopy on the non-exact case, in the spirit of \cite{hat+wag}. This problem is partially treated on \cite{yoTesis}, where we 
provide a process which would help to define such an obstruction for generic paths only containing two intermediate consecutive indexes. The mentioned process, that we call \lq\lq graft of swallow-tail loops", requires theorem \ref{th:ElimParam} of this paper to work. Going one step further -- to paths whose critical indexes are contained on the region $\sing{2,\ldots,n-2}$ -- together with the graft of swallow-tail loops would give a way to define the obstruction.\\
Since lemmas \ref{lem:lips} and \ref{lem:SwallowTail} easily generalize to arbitrary index, they should be useful to shrinking further the range of critical indexes in the context of Novikov acyclicity.\\

\section{Novikov homology}\label{sect:MNHomology}
Take $ \al $ a Morse closed $ 1 $-form and choose $ B_* \subset\wt{M}$ a lifting of $ Z(\al) $ to the universal cover $ \wt{M}\flee{\pi}M $, where $ P\in B_* $ corresponds bijectively to $ p\in Z(\al) $. We remark that $ B_* $ is graduated by the index. From that, we derive a graded module $ C_*(\al) $ freely generated by $ B_* $ over the Novikov ring $ \Lam_{-u} $ associated with the cohomology class of $ \al $. We recall that $ \Lam_{-u} $ is the completion of the group ring $ \Lam:=\bZ[\pi_1M] $ given by \[ \ensCuatro{\Lam_{-u}}{\sum_{g\in\pi_1M}n_gg,n_g\in\bZ}{n_g=0\text{ except for finitely many }g}{\text{or }\esp\lim\limits_{n_g\neq 0}u(g)=-\infty}. \]
As for Morse functions, a differential for $ C_*(\al) $ can be obtained if we give us $ \xi $, a special kind of vector field adapted to $ \al $. 

\begin{definition}\label{df:Pseudo} 
A vector field $ \xi $ is a \emph{pseudo-gradient} of a Morse closed $ 1 $-form $ \al $ if the function $ \al(\xi)\in\cC^{\infty}(M) $ 
\begin{enumerate}[(1)]
\item is strictly negative outside $ Z(\al) $ and\label{cd:Lyap}
\item $ Z(\al) $ are non-degenerate maxima of $ \al(\xi) $.
\end{enumerate} 
\end{definition}

Such a vector field vanishes only at $ Z(\al) $. Each zero $ p\in Z(\al) $ has a local stable and unstable manifold determined by $\xi$; they are denoted respectively by $ \ascT{p}{\loc},\descT{p}{\loc} $ and are diffeomorphic to Euclidean spaces of complementary dimension. Their intersection is transverse and reduced to $ p $. The global stable and unstable manifold of $ p\in Z(\al) $ determined by $\xi$ are defined by 
\[W^s(p):=\ens{x\in M}{\lim_{t\to +\infty}\xi^t(x)=p}\miniesp ; \miniesp W^u(p):=\ens{x\in M}{\lim_{t\to -\infty}\xi^t(x)=p},
\] where $(\xi^t)_{t\in\bR}$ denotes the flow of $\xi$. Given $P$ a lifting of $p$, we denote $ \binap{P} $ the connected component of $ \pi^{-1}(\binap{p}) $ containing $ P$.\\

We denote the set of orbits of $\xi$ going from $p$ to $q$ by $\cL(p,q)$. The image of these orbits is clearly contained in $ \desc{p}\cap\asc{q} $. The choice $B_*$ allows one to define the \emph{enwrapment}, which is a map $ \cL(p,q)\flee{e}\pi_1M $. Remark that every $\ell\in\cL(p,q)$ has a unique lifting $\wt{\ell} $ starting from $P$; $\wt{\ell}$ goes so to $g_{\ell}Q$ for a unique $ g_{\ell}\in\pi_1M $. We set $ e(\ell):=g_{\ell} $.

\begin{definition}
A pseudo-gradient $ \xi $ of a Morse closed $ 1 $-form $ \al $ is \emph{Morse-Smale} if its global stable and unstable manifolds intersect transversely:
\[ \desc{p}\trans\asc{q},\esp\text{ for all }\esp p,q\in Z(\al). \]
\end{definition}

This class of vector fields allows one to count orbits from $ p $ to $ q $, two zeroes of $ \al $ such that $ \ind(p)=ind(q)+1 $. The orbits  $ \ell\in\cL(p,q) $ depend on the choice of such a $ \xi $ and their enwrapment $ e(\ell) $ on the choice of the lifting $B_*$. A sign $ s(\ell)=\pm 1 $ can be computed if an additional choice of orientation of the unstable manifolds is made.  We obtain the incidences $\scal{P}{Q}^{\xi,B}:=\sum_{\ell\in\cL(p,q)}s(\ell)e(\ell) \in\Lam_{-u}$. The property of being Morse-Smale for a pseudo-gradient is generic as it was proven in \cite{kupka}; the interested reader is sent to \cite{pajitnovCircle}. The next theorem is due to Latour:

\begin{Teo}[\neee\cite{latour}]\label{th:Differential}
The $ \Lam_{-u} $-linear extension of the map 
\applic{\p^{\xi,B}_{*+1}}{C_{*+1}(\al)}{C_{*}(\al)}{P}{\di\sum_{Q\in  B_{*}(\al)}\scal{P}{Q}^{\xi,B}Q} is a differential for the graded $ \Lam_{-u} $-module $ C_*(\al) $. Moreover, the complex $ C_*(\al,\p^{\xi,B}_*) $ does not depend on the choice of the triple $(\al,\xi,B_*)$ up to \emph{simple-equivalence}.
\end{Teo}

The notion of simple-equivalence can be found on \cite{maumaryTypesimple}. The homology of the complex $ C_*(\al,\p^{\xi,B}_*) $, which we denote by $ H_*(M,u) $, only depends on the class $u$ and it is called the \emph{Novikov homology} of the class $u$. The historical reason is that the first theorem with the flavour of theorem \ref{th:Differential} was given by Novikov in his foundational paper \cite{novikovMultivalued}, which first gave Morse-type inequalities for $\bS^1$-valued functions $f:M\to\bS^1$. These inequalities are related with a homology theory\footnote{In the mentioned paper, Novikov called \lq\lq semi-open" his homology.} of an \emph{abelian} cover associated to $f$. Later, Sikorav proved in \cite[Ch. IV]{sikoravTesis} that the homology defined by Novikov is indeed a homology with local coefficients and extended it to \emph{non-abelian} covers.\\
Latour also proved that $ H_*(M,u) $ coincides with the version of Novikov homology on the universal cover defined in Sikorav's thesis.\\
Further versions of theorem \ref{th:Differential} can be found on \cite[Ch. 14, Th. 2.2 and Th. 2.4]{pajitnovCircle} and on \cite[Th. 3.1]{farberTopoClosed}.

\section{Connecting saddles and elimination of centers}\label{sect:ConSaddElim0}

We are going to consider \emph{sinks} (centers of index $ 0 $). The case of \emph{sources} (centers of index $ n $) can be treated in an analogous way.\\
A Morse closed $ 1 $-form $ \al $ induces a singular foliation $ \cF_{\al} $ in $ M $: two points $ x,y $ belong to the same leaf $ F $ if there exists a smooth path $ [0,1]\flee{\ga}M $ joining $ x $ to $ y $ such that $ \al(\ga'(t))=0 $ for all $ t\in [0,1] $. A leaf is called singular if it contains some zero of $ \al $.\\
Consider now $ \xi $ a pseudo-gradient for $ \al $. The orbits $ \ell\in\cL^{\xi}(p,q) $ of $ \xi $ going from $ p $ to $ q $, two zeroes of $\al $, have an associated \emph{transverse length} $ L(\ell) $. This length is given by the integral \[ L(\ell):=-\int\ell^*(\al), \] which is positive thanks to the condition \emph{(\ref{cd:Lyap})} of the definition \ref{df:Pseudo} of a pseudo-gradient.\\

Remark that $ u $ induces a group morphism $ \pi_1M\flee{u}\bR $ given by integrating $ \al $ along a representative $ \gamma $ of a loop $ g=[\gamma] $. As $\pi^*\al$ is exact, let us take a \emph{primitive} $\wt{M}\flee{h}\bR$, which verifies $dh=\pi^*\al$. An easy calculation shows that for $\ell\in\cL(p,q)$ we have: 
\begin{equation}\label{eq:HeightEnwrap}
L(\ell)=L(\wt{\ell})=h(P)-h(e(g_{\ell})Q)=h(P)-h(Q)-u(e(g_{\ell})),
\end{equation}
where $\wt{\ell}$ denotes the lifted orbit starting from $P$. This equality relates numerically the choice of liftings with the enwrapment of orbits.

\begin{nota}\label{nota:SaddSeparat}
{\rm We call \emph{saddles} $ \cS:=Z_1(\al) $ the index $ 1 $ zeroes of $ \al $. The unstable manifold of a saddle $s$ associated with a pseudo-gradient $\xi$ always decomposes as $ \desc{s}=\sing{s}\cup\ell^+\cup\ell^-$  where $\ell^+,\ell^-$ are two different non-trivial orbits called \emph{separatrices}. }
\end{nota}

A well known fact about Novikov homology is that $H_0(M,u)=0$ for every $u\neq 0$ (see \cite[Cor. 1.33]{farberTopoClosed} for example). So, for $ \xi $ Morse-Smale, it implies in particular that the set $ \cS_c $ of saddles with at least one separatrix going to $ c $ is non-empty: otherwise the lifting $C$ has no chance to be in $\im(\p_1^{\xi})$ and we would have $0\neq[C]\in H_0(M,u) $.\\
We distinguish two kinds of saddles $ s\in\cS_c $.\\
~\newline
\begin{minipage}{0.7\textwidth}
The saddle $ s\in\cS_c $ is said to be of type  \emph{ker} if $ \cL(s,c)=\sing{\ell^+,\ell^-} $ and $ L(\ell^+)= L(\ell^-) $. Any lifting of $ \desc{s} $ determines two liftings of $ c $ related by an element $ g\in\ker(u) $ as in figure \ref{fig:TypeKer}: this is a direct consequence of relation \eqref{eq:HeightEnwrap}. These collection of saddles is denoted by $ \cS_c^{\ker} $. Saddles in the complementary set $\cS_c^n:= \cS_c\prive \cS_c^{\ker}$ are called \emph{normal saddles}. Remark that if $ C\in B_* $ is the chosen lifting of a sink $ c $, we can find a lifting $ S $ of $ s\in\cS_c $ such that $ \desc{S} $ is as in figure \ref{fig:TypeKer} or \ref{fig:Type1}. However, there is a priori no reason to have $ S\in B_* $.
\end{minipage}
\hspace{5pt}
\vrule
\begin{minipage}{0.25\textwidth}
\centering
\includegraphics{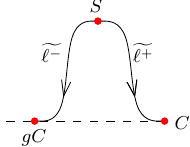}
\captionof{figure}{$s\in\cS_c^{\ker} $}
\label{fig:TypeKer}
\end{minipage}
\DibLocScalNomEtiq{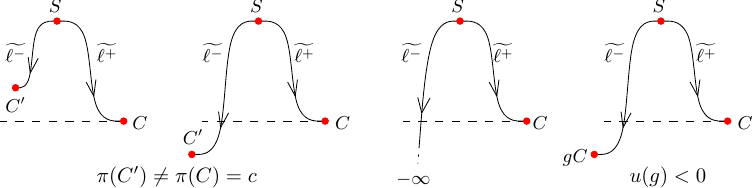}{h!}{.9}{Possible situations of a lifting of $ \desc{s} $ to $ \wt{M} $ for $ s\in\cS_c^n $.}{fig:Type1}

If a normal saddle $ s\in\cS_c^n $ has its two orbits going to $ c $, their transverse lengths\footnote{Since there is no other notion of length associated with orbits in the present paper, we will in the sequel forget the adjective ``transverse'' relative to this length.} are necessarily different; for any normal saddle $s$, we note by $ \ell^+ $ the shortest orbit joining $s$ to $ c $ and call it \emph{the connecting orbit} of $s$. We note by $\cL^+_c$ the finite set of connecting orbits from the set of normal saddles $s\in\cS_c^n$ to $c$. 

\begin{definition}\label{df:ConnectSadd}
A \emph{connecting saddle} for the sink $ c $ is a normal saddle $s\in\cS_c^n  $ minimizing the length of connecting orbits $ \cL_c^+\flee{L}\bR^+ $. In other words: $ L(\ell^+_s)\leq L(\ell^+_{s'}) $ for every $ s'\in\cS_c^n $.
\end{definition}

Connecting saddles are going to be the main tool to eliminate sinks. We justify their existence by a purely algebraic argument in lemma \ref{lem:ConnectSadd} below.

\begin{lemma}\label{lem:ConnectSadd}
Let $ \al $ be a Morse closed non-exact $ 1 $-form together with a Morse-Smale pseudo-gradient $ \xi $. Then, every sink $c\in Z_0(\al)$ admits a connecting saddle.
\end{lemma}
\begin{proof}
Choose liftings of $c$ and of saddles $\cS$; for saddles in $\cS_c$ we do that as  in figures \ref{fig:TypeKer} and \ref{fig:Type1}. Let us see that $ \cS_c=\cS_c^{\ker}$ would imply that $ C\notin\im(\p^{\xi}_1) $, leading to the same contradiction as before. By identifying saddles with their chosen liftings, we clearly have $ C\notin\p_1^{\xi}\parent{\Lam_{-u}\otimes(\cS\prive{\cS_c})} $. An element of $ \im\parent{\p_1^{\xi}|_{\Lam_{-u}\otimes(\cS_c^{\ker})}} $ is written as a finite sum times $ C $: $$\mu\cdot C:=\sum_{s_i\in\cS_c^{\ker}}\lam_i(\pm 1\pm g_i)\cdot C, $$ where $ g_i\in\ker(u) $ and $ \lam_i\in\Lam_{-u} $. The next map is a morphism of $\bZ[\ker(u)]$-modules: \[\aplic{(\cdot)^*}{\Lam_{-u}}{\bZ[\ker(u)]}{\lam}{\sum\limits_{g\in\ker(u)}n_gg}\esp . \] Since $ (\pm 1\pm g_i)=(\pm 1\pm g_i)^* $ for all $ i $, we have $ \mu^*=\sum\lam_i^*(\pm 1\pm g_i) $.  Consider $ I $ the kernel of the augmentation morphism of rings $ \bZ[\ker(u)]\flee{\ve_2}\bZ_2 $ given by $ ng\mapsto n\mo 2$. As the terms $ 
 (\pm 1\pm g_i) $ belong to the augmentation ideal, also does $ \mu^* $. Then $ \mu $ cannot be $ 1 $. 
\end{proof}

\begin{definition}\label{df:Excellent}
A Morse closed $ 1 $-form $ \al $ is said to be $0$-\emph{excellent} if there exists a pseudo-gradient $\xi$ for $\al$ such that for every sink $ c\in Z_0(\al) $, the map $\cL_c^+\flee{L}\bR^+ $ is injective.
\end{definition}

\begin{remark}\label{rem:0ExcelOpenGener}
The property of $0$-excellence guarantees the \emph{uniqueness} of connecting saddles: the connecting saddle of a sink $c$ is the only $s\in\cS_c^n$ such that $\ell^+_s$ realizes the minimum of  $\cL_c^+\flee{L}\bR^+ $.  Moreover, $0$-excellence is generic for $\al$ because any Morse closed $ 1 $-form can be slightly perturbed in order to obtain a cohomologous $0$-excellent $ 1 $-form by application of lemma \ref{lem:Arrang} to saddles.
\end{remark}

\begin{lemma}[{\sc Rearrangement lemma}]\label{lem:Arrang}
Let $p$ be a zero of a Morse closed $1$-form $\al_0$ of index $ k $, equipped with a primitive $ \wt{M}\flee{h_0}\bR $ and a pseudo-gradient $ \xi $; consider $P$ a lifting of $p$ to $\wt{M}$. If $K>K'>0$ are such that \[ \desc{P}\cap h_0^{-1}\parent{[h_0(P)-K,+\infty)} \text{ is an embedded disk } \bD^k,\] then there exists $U$ a neighbourhood of $p$ and a path $\formes$ of cohomologous Morse closed $1$-forms with primitives $\primits$ such that for all $t\in [0,1]$ we have:
\begin{enumerate}
\item $ \xi $ is pseudo-gradient of $ \al_t $,
\item $U\cap Z(\al_t)={p}$, 
\item $ \al_t=\al_0 $ on $ M\prive U$ and
\item $ h_1(P)=h_0(P)-K'$.  
\end{enumerate}  
\end{lemma}
\begin{proof}
In the context of real-valued functions, this lemma is classical and can be found in \cite[4.1]{milnorHcob} for example. For closed non-exact  $1$ forms, it is also well known and can be found in \cite[9.5.1]{farberTopoClosed} under a slightly different presentation.  The hypothesis about $\desc{P}$ allows one, if $k>1$, to take the neighbourhood $U$ as a thickening of $\ascT{p}{\loc}\cup\bD^k $ in such a way that it has lateral boundary  $\p_{\lat}U$ diffeomorphic to $\bS^{k-1}\x\bS^{n-k-1}\x [0,1]$ where each $(x,y)\x [0,1]$ is contained in an orbit of $\xi$. A properly chosen isotopy of $[0,1]^2$ decreases the values of the primitive on a collar of $\p_{\lat}U$ inside $U$ as claimed; this is standard and carefully proven in \cite[Lemme 2.2.34]{yoTesis}. Since we deal principally with sinks ($k=0$), and the mentioned choice of  $U$ is no more possible in this case, we present  here the proof for sinks. Remark that sinks automatically verify the hypothesis about $\desc{P}=\sing{P}$ for every $ K>0 $: there is no obstruction to decrease the values of a primitive as much as we want near local minima.\\

Choose $\xi$ a Morse-Smale pseudo-gradient for $\al_0$ and call $m:=h_0(C) $. Define $U:=\ascT{c}{\loc}$ and take $V$ the connected component of $\pi^{-1}(U)$ containing $C$. We provide  $\bD^n_{\ve}$, the closed $n$-disk of radius $\ve>0$, with polar coordinates $ (\theta,r)\in\frac{\bS^{n-1}\x[m,m+\ve]}{\bS^{n-1}\x\sing{m}}$. By taking an $\ve>0$ small enough, we have a diffeomorphism $\Psi:\bD^n_{\ve}\to V\cap h_0^{-1}\parent{(-\infty, m+\ve]} $ such that:
\begin{enumerate}
\item $h_0\parent{\Psi(\bS^{n-1}\x\sing{r})}=r$, for every $r\in [m, m+\ve]$ and
\item $\Psi\parent{\sing{\theta}\x(m,m+\ve]}$ is a lifting of a trajectory of $\xi$, for all $\theta\in\bS^{n-1}$.
\end{enumerate}

\begin{minipage}{0.4\textwidth}
Take $\ch{\vp}{t}$ an isotopy of $\bR$ such that for all $ t\in[0,1]$:
\begin{enumerate}
\item $\vp_t|_{[m+\frac{\ve}{2},\infty)}=\Id$, 
\item $\vp_t|_{(-\infty, m+\frac{\ve}{4}]}=\Id-tK$.
\end{enumerate}
\end{minipage}
\hspace{5pt}
\vrule
\hspace{5pt}
\begin{minipage}{0.4\textwidth}
\centering
\includegraphics[scale=0.8]{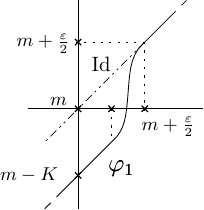}
\captionof{figure}{Graph of $\vp_0$ and $ \vp_1$.}
\label{fig:Graph}
\end{minipage}\\

The path $h_t:=\vp_t\co h_0|_{V}$ extends $ \pi_1M$-equivariantly  to $ V':=\bigcup_{g\in\pi_1M}gV$ and then to the whole $\wt{M}$ by taking $h_t=h_0$ on $\wt{M}\prive V'$. The induced path of $1$-forms $\al_t:=\pi_*(dh_t)$ fulfills the required conditions.\\
The vector field $ \xi $ clearly keeps the property of being a pseudo-gradient all along the path $ \formes $ since the values of $ \al_t(\xi) $ are those of $  \al_0(\xi) $ eventually multiplied by $ \vp_t'>0 $. 
\end{proof}

We only need the cases $ k=0,1 $ of lemma \ref{lem:Arrang} in this paper. We want to eliminate pairs of Morse zeroes of these indexes; the next version of the Morse elimination lemma, which corresponds to the only theorem of \cite{laudenbachElim}, is well adapted to our purpose.  

\begin{lemma}[{\sc Morse elimination lemma}]\label{lem:ElimStd}
Let $N\flee{h_0}\bR$ be a Morse function equipped with a Morse-Smale pseudo-gradient $\xi$. Let $p,q$ two critical points of $h_0$ such that $\ind(p)=\ind(q)+1$. Suppose that there exists $\ve>0 $ such that every orbit of $\desc{p}$ reaches the level $h_0^{-1}\parent{h_0(q)-\ve}$ but one which goes to $q$,\\
then there exists $V$ a neighbourhood of $\adh{\desc{p}}\cap h_0^{-1}\bparent{[h_0(q)-\ve,+\infty)}$ and a generic path of functions $\primits$ such that:
\begin{enumerate}
\item $ h_t $ is Morse for all $ t\neq \frac{1}{2} $, 
\item $ h_t=h_0 $ on $ N\prive V$ for all $t$ and
\item $ \Crit (h_t|_V)=\V$ for all $t>\frac{1}{2}$.
\end{enumerate}
\end{lemma}

\begin{lemma}\label{lem:Elim0}
Let $s$ be a connecting saddle for a sink $c$ of a Morse closed $1$-form $\al_0$. There exists a generic path $\formes$ of cohomologous $ 1 $-forms, beginning at $\al_0$, crossing the birth/elimination strata only once and such that $ Z(\al_1)=Z(\al_0)\prive\sing{s,c} $.
\end{lemma}
\begin{proof}
Equip $\al_0$ with a Morse-Smale pseudo-gradient such that the hypothesis in the present lemma is verified. Consider $\wt{M}\flee{h_0}\bR$ a primitive of $\al_0$; since $s$ is a normal saddle we can take $S,C$, liftings of $s,c$ as in figure \ref{fig:Type1}. We want to apply lemma \ref{lem:ElimStd} relatively to $p=S,q=C$, which are joined by the lifted connecting orbit $\wt{\ell^+}$.
Since $\desc{S}$ has only another orbit, namely $\wt{\ell^-}$ and thanks to equation \eqref{eq:HeightEnwrap}, the hypothesis of lemma \ref{lem:ElimStd} relative to  $\desc{S}$ is equivalent to $L(\ell^-)>L(\ell^+) $. The only situation which does not verify this inequality is that of first picture on figure \ref{fig:Type1}; but in this case $\abs{\cL(s,c)}=1$ and the orbit $\ell^-$ of $ s $ does not go to $c $. We can then apply lemma \ref{lem:Arrang} to $ P=C' $, the ending point of $ \wt{\ell^-} $, in order to decrease the value $h_0(C')$  as much as we want without affecting the value of $h_0$ on $C$; after this modification the situation is that of the second picture of figure \ref{fig:Type1}. In any case, the lifted orbit $\wt{\ell^-}$ goes under the level containing $C$, and the unstable manifold $\desc{S}$ verifies the hypothesis of lemma \ref{lem:ElimStd}. We find $N$ a neighbourhood of $\adh{\desc{S}}\cap h_0^{-1}\bparent{[h_0(C)-\ve,\infty)}$ for $\ve>0$ small enough so that $\pi|_N$ is injective. Apply now the elimination lemma \ref{lem:ElimStd} to $h_0|_{N}$ and the critical pair $S,C$; we perform this deformation of $h_0$ in a $ \pi_1M$-equivariant manner by imposing $h_t|_{gN}=h_t|_{N}+u(g)$ for every $g\in\pi_1M$ and $t\in [0,1]$. So the push-forward  $\al_t:=\pi_*(dh_t)$ for $ t\in [0,1]$, is a well defined path of $1$-forms which has the required properties. 
\end{proof}

Note that lemmas \ref{lem:Arrang}, \ref{lem:ElimStd} and \ref{lem:Elim0} also hold for smooth families of data when the assumptions are satisfied for every parameter. 
The next two lemmas \ref{lem:lips} and \ref{lem:SwallowTail}, are nothing but the adapted versions of \cite[Lemma 3.6 and Lemma 3.5]{laudenbachReid} for non-exact closed $ 1 $-forms, where we restrict to the only case which we are concerned with: zeroes of indexes $0$ and $ 1$. In the mentioned paper, the author proves the \emph{swallow tail lemma}, stating that the proof of the \emph{lips lemma} can be performed in the same way. We proceed in the complementary way. Lemma \ref{lem:SwallowTail} can be proven by adapting the proof of \cite[Lemma 3.5]{laudenbachReid}.

\begin{lemma}[{\sc Elementary lips lemma for sinks}]\label{lem:lips}
Let $ (\al_t, \xi_t)_{\param} $ be generic. Suppose that $ \chext{s_t,c_t}{t}{t_0}{t_1} $ are continuous paths of saddles and sinks such that $ s_{t_0}=c_{t_0},s_{t_1}=c_{t_1} $ are respectively birth and elimination and that there exists an $ \ve>0 $ and a continuous family $\parent{\ell_t^+}_{t\in (t_0,t_1)},\ell_t^+\in\cL(s_t,c_t)$ such that for every $ t\in (t_0,t_1) $ we have: 
\begin{equation}\label{eq:LengthIneq}
L(\ell_t^-)>L(\ell_t^+)+\ve,\text{ where } \ell_t^- \text{ denotes the other separatrix of } \desc{s_t}.
\end{equation}
Then, for all $ \de>0 $ there exists a generic $ \ch{\al_t'}{t} $ such that:
\begin{enumerate}
\item $ \al_t=\al_t' $ for all $ t\notin (t_0-\de,t_1+\de) $,
\item $ Z(\al_t')=Z(\al_t)\prive\sing{s_t,c_t} $ for all $ t\in (t_0-\de,t_1+\de) $.
\end{enumerate}
\end{lemma}
\begin{proof}
We are inspired by the proof of the only theorem of \cite{laudenbachElim}. Choose primitives $ \primits $ and consider a continuous lifting $ \chext{\wt{\ell_t^+}}{t}{t_0}{t_1} $ of the distinguished orbits, which join $ S_t $ with $ C_t $, liftings of the considered zeroes. Thanks to the assumption about $\ell_t^-$, the curve $\desc{S_t}$ intersects the level of $ h_t(C_t)-\ve $ in a unique point $ a_t $ for $t\in [t_0,t_1]$. For each of these times, choose an arc $\lam_t^+:[-\sqrt{\ve},\sqrt{\ve}]\to\asc{C_t}$  verifying $ h_t(\lam_t^+(u))=h_t(C_t)+u^2 $ for every $ u\in[-\sqrt{\ve},\sqrt{\ve}] $ and extending smoothly $\desc{S_t}$ at $ \sing{\lam_t^+(-\sqrt{\ve})}=\wt{\ell_t^+}\cap h_t^{-1}\parent{h_t(C_t)+\ve}$. Denote by $b_t:=\lam_t^+(\sqrt{\ve})$ and by $\descT{S_t}{+}$ the connected and compact arc inside $\desc{S_t}$ determined by the points $a_t$ and $ \lam_t^+(-\sqrt{\ve}) $. \\
Let $a_t:=a_{t_0},a_{t_1}, b_t:=b_{t_0},b_{t_1}$ respectively for $t\in [t_0-\de,t_0),(t_1,t_1+\de]$. It is easy to find a continuous family of smooth arcs $ \chext{[0,1]\flee{I_t}\wt{M}}{t}{t_0-\de}{t_1+\de} $ such that for all $ t\in [t_0-\de,t_1+\de] $:
\begin{enumerate}
\item it parametrizes $ \descT{S_t}{+}\cup \im\parent{\lam^+_t} $ when defined,
\item the only critical points of $ h_t|_{\im(I_t)} $ are $ S_t,C_t $ and have same index that $s_t,c_t$,
\item the extreme values are $ I_t(0)=a_t $ and $ I_t(1)=b_t$.
\end{enumerate}
We remark that for $ t\in [t_0-\de,t_0)\cup (t_1,t_1+\de] $ we can define $ I_t $ by the rescaled flow line of $ -\wt{\xi_t} $  of transverse length equal to $ 2\ve $ starting at $ a_t $; namely $ I_t(u):=\xi_{t}^{-2\ve u}(a_t) $.\\

{\sc Claim:} \textit{There exists a smooth family  $ \chext{N_t}{t}{t_0-\de}{t_1+\de} $ of tubular neighbourhoods of the arcs $ I_t $ in $ \wt{M} $ where $ \pi $ is injective, together with coordinates $ (u,z)\in [0,1]\x\bR^{n-1} $ such that $ I_t(u)=(u,0) $ and
\[ h_t(u,z)=f_t(u)+Q(z).  \]
Here $ Q $ is a positive definite quadratic form and $  f_t:=h_t\co I_t $.}\\

Once the claim is proved, we can conclude as follows:\\
~\newline
\begin{minipage}{0.3\textwidth}
For all $ t\in [t_0-\de,t_1+\de] $, choose a smooth function $ g_t:[0,1]\to\bR $ such that $ g_t\leq f_t $, coincides with $ f_t $ near $ \sing{0,1} $ and haves no critical points as figure \ref{fig:LipsDescent} suggests.
\end{minipage}
\hspace{3pt}
\vrule
\hspace{1pt}
\begin{minipage}{0.65\textwidth}
\centering
\includegraphics[scale=1.1]{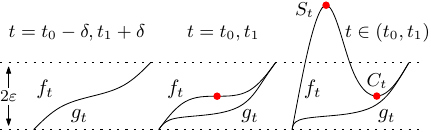}
\captionof{figure}{Graphs of $f_t, g_t$ at the indicated times.}
\label{fig:LipsDescent}
\end{minipage}\\

We want to modify the primitives $h_t=f_t+Q$ into $g_t+Q$, the latter having no critical point on $N_t$ thanks to the choice of $g_t$.
The interpolation, for $ s $ running into $ [0,1] $, given by $ k^s_t:= f_t+s\parent{g_t-f_t}+Q $ almost succeeds: we do not want to change the values of the primitive $h_t$ near $\p N_t$. Pick  $\nu>0$ as small as desired and construct a bump function $ \omega:[0,\infty) \to\bR $ verifying: 
\begin{enumerate}
\item $\omega' \leq 0$,
\item  $\supp (\omega)=[0,2\nu]$ and $\omega |_{[0,\nu]}=1$. 
\end{enumerate}
The two parameter family of functions \[ h_t^s(u,z):=f_t(u)+\omega(\norm{z})\cdot s\bparent{g_t(u)-f_t(u)}+Q(z),  \] clearly coincides with $h_t$ near $\p N_t$ and with $k_t^s$ when $\norm{z}\leq \nu$, in particular near $\im(I_t)=\sing{z=0}$. Moreover, it has the same critical points as $ k^s_t $:  for $z$ such that $\norm{z}\geq \nu$, a calculation shows that for every $i=1,\ldots, n-1$: 
\[
\dfrac{\p h_t^s}{\p z_i}(u,z):=\parent{\dfrac{\omega'(\norm{z})}{\norm{z}} s\bparent{g_t(u)-f_t(u)}+q_i}z_i,
\]
where the term $q_i>0$ comes from the positive definite quadratic form $Q$. The factor coming with $z_i$ is strictly positive due to the assumption on $\omega'$ and to the fact that $g_t-f_t\leq 0$. The  $ z$-derivative of $h_t^s$ vanishes only at $ z=0 $, where $h_t^s$ and $k_t^s$ coincide.\\
This deformation can be done $ \pi_1M $-equivariantly since $ \pi $ is injective over $ N_t $ and we obtain the $ s=1 $ extremity $ \chext{h^1_t}{t}{t_0-\de}{t_1+\de} $ which induces the generic path of closed $ 1 $-forms $ \ch{\al_t'}{t} $ that we were searching for.\\

{\sc Proof of the claim:} The restriction of the projection $\pi$ to any individual stable or unstable manifold is injective. Moreover, we can suppose $\parent{\pi_1M\cdot\descT{S_t}{+}}\cap \im\parent{\lam_t^+}=\V$ by taking a smaller $\ve$ if necessary and hence, $ \pi|_{\im(I_t)} $ is injective for every $ t\in [t_0,t_1] $. Since $ \pi $ is also injective over any flow line, $ \pi|_{\im(I_t)} $ can also be supposed injective for $ t\notin [t_0,t_1] $. We deduce that for every time $ t $ we can consider a compact neighbourhood $ U_t $ of $ \im(I_t) $ where $ \pi $ is injective.\\

Since $ \formes $ is generic and contains a birth singularity of index $ 0 $ at $ t=t_0 $, its universal unfolding (see \cite[Ch. IV, \S6]{martinet}) is under the form $ \al_t(u,z)=d\parent{u^3-(t-t_0)u+Q(z)} $, where $ (u,z) $ are coordinates in some neighbourhood $ N\subset U_{t_0} $ of the birth zero $ s_{t_0}=c_{t_0} $ and $ t $ lives in $ [t_0^-,t_0^+] $ some small neighbourhood of $ t_0 $. By construction of the arcs $ I_t $, the path of functions $ f_t:[0,1]\to\bR $ is also generic with birth singularity at $ t=t_0 $. This allows one to find coordinates $ (u,z) $ splitting $ N $ as in the claim for $ t\in [t_0^-,t_0^+] $. We take $ N_t:=N $ for $ t\in [t_0^-,t_0^+] $. We find the neighbourhoods $ N_t $ for $ t\in [t_1^-,t_1^+] $ by a similar reasoning.\\

For $ t\in [t_0^+,t_1^-] $, let us take Morse models $ \cM(S_t)\subset U_t \supset\cM(C_t) $ of $ S_t,C_t $ with coordinates $ (u,z)\in\bR\x\bR^{n-1} $ such that \[ h_t|_{\cM(S_t)}(u,z)=h_t(S_t)-u^2+Q(z)\quad\text{and}\quad h_t|_{\cM(C_t)}(u,z)=h_t(C_t)+u^2+Q(z). \]
Further, we can require that $ (u,0) $ are coordinates of $ \im(I_t)\cap\cM(S_t) $ and of $  \im(I_t)\cap\cM(C_t) $ respectively, in such a way that the functions $ h_t|_{\cM(S_t)\cup\cM(C_t)} $ and $ h_t(I_t(u))+Q(z) $ coincide. Take $ N_t\subset U_t $ a tubular neighbourhood of $ \im(I_t) $ whose $ \bD^{n-1} $-fibers over $ \im(I_t)\cap\parent{\cM(S_t)\cup\cM(C_t)} $ are contained in the hypersurfaces $ \sing{u=\text{const}} $. We have then extended the $ z $-part of the $ (u,z) $-coordinates to the whole $ N_t $, verifying $ \im(I_t)=\sing{z=0} $.\\
By construction of $ I_t $, the functions $ h_t(u,z) $ and $ h_t(I_t(u))+Q(z) $ clearly coincide on $ \im(I_t)\prive\sing{S_t,C_t} $, where they do not have critical points. Their germs are thus isotopic and the path method of Moser (see \cite{moser}) can be used to find local isotopies that we glue by partition of unity; moreover, the isotopy can be chosen stationary at $ \cM(S_t)\cup\cM(C_t) $ since the functions coincided there before. This can be seen as a direct application of \cite[Lemma 3.1]{laudenbachReid}.\\
This can be continuously done in the whole interval $ [t_0^+,t_1^-] $; so we have found the mentioned coordinated tubular neighbourhoods for the intermediate times $ [t_0^+,t_1^-] $. Still, to have a coherent system of coordinate neighbourhoods all over the interval $ [t_0^-,t_1^+] $, we have to produce change of coordinates twice at $ t=t_0^+,t_1^- $ from the $ (u,z) $ coming respectively from the birth and elimination path in order to obtain the specified Morse models respectively around $ S_{t_0^+},C_{t_0^+} $ and $ S_{t_1^-},C_{t_1^-} $. The variables $ u $ and $ z $ are separated in both cases; to make the models coincide on $t=t_0^+$, it is enough to operate the coordinate change $\di u\mathop{\longmapsto}^{\vp} \sqrt{(u\pm \kappa)^3\mp 3\kappa(u\pm\kappa)^2} $ in the $ u $-part where $ \kappa:=\sqrt{\frac{t_0^+-t_0}{3}} $ and $ (+,-) $ corresponds to $ (S_t,C_t) $. Remark that the germs at $ 0 $ given by $ \Id $ and $ \vp $ are isotopic. A similar modification is done at $t=t_1^- $. 
\end{proof}

The next concept is useful to describe generic paths of closed $ 1 $-forms $ \formes $. It depends essentially on the choice of paths $ B_*(\al_t)\subset\wt{M} $ lifting the continuous paths determined by the zeroes $ Z(\al_t) $. Choose $ B_*(\al_0) $, liftings of the initial zeroes together with liftings of the birth zeroes; this determines $ B_*(\al_t) $, \emph{continuous liftings} of the zeroes.

\begin{definition}\label{df:CerfNov}
Let $ \formes $ be a generic path of closed $ 1 $-forms. Take a family of primitives $ \primits $ together with continuous liftings $ B_*(\al_t) $. Associated with this data, we have the Cerf-Novikov graphic, which is given by the set:
\[ \Gra(B_*):=\bigcup_{t\in[0,1]}\sing{t}\x h_t\parent{B_*(\al_t)}\subset [0,1]\x\bR.\]
\end{definition}

\begin{example}
The local change in the Cerf-Novikov graphic when we apply the elementary lips lemma for sinks \ref{lem:lips} can be seen in figure \ref{fig:LipsGraphic}.
\end{example}
\DibLocScalNomEtiq{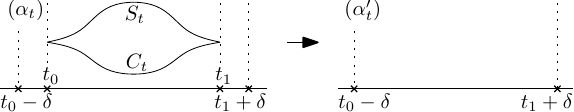}{h!}{1.1}{Elimination of a ``pair of lips''.}{fig:LipsGraphic}

\begin{lemma}[{\sc Elementary swallow-tail lemma for sinks}]\label{lem:SwallowTail}
Let $ (\al_t, \xi_t)_{\param} $ be generic. Suppose that $ \chext{s_t,s_t',c_t}{t}{t_0}{t_1} $ are continuous paths of saddles and sinks such that $ s_{t_0}=c_{t_0},s'_{t_1}=c_{t_1} $ are respectively birth and elimination and that there exists an $ \ve>0 $ 
and continuous families  $\ell_t^+\in\cL(s_t,c_t),(\ell'_t)^+\in\cL(s_t',c_t)$ for  $t\in (t_0,t_1)$ such that the  other separatrix of $\desc{s_t},\desc{s_t'}$ respectively verifies inequality \eqref{eq:LengthIneq} of lemma \ref{lem:lips}.\\
Then, for all $ \de>0 $ small enough, there exists a generic $ \ch{\al_t'}{t} $ such that:
\begin{enumerate}
\item $ \al_t=\al_t' $ for all $ t\notin (t_0-\de,t_1+\de) $,
\item $ Z(\al_t')=\bparent{Z(\al_t)\prive\sing{s_t,s_t',c_t}}\cup\sing{s''_t} $ for all $ t\in (t_0-\de,t_1+\de) $, where $ \chext{s_t''}{t}{t_0-\de}{t_1+\de} $ is a continuous path of saddles starting at $ s'_{t_0-\de} $ and ending at $ s_{t_1+\de} $.
\end{enumerate}
\end{lemma}

\begin{example}\label{exam:SwallowTailGraphic}
{\rm  Applying lemma \ref{lem:SwallowTail} affects the graphic as it is depicted in figure \ref{fig:SwallowTailGraphic}. The enwrapment of $ (\ell'_t)^+ $ does not vary in $ (t_0,t_1) $; denote it by $ g:=e\bparent{(\ell'_t)^+} $. Nothing guarantees a priori that $ g=1_{\pi_1M} $, or in other words, that the chosen liftings $ S_t' $ of $ s_t' $ are those realizing the elimination of the $ C_t $. The dashed line is just a translation by the vector $ \bparent{0,u(g^{-1})} $ of the curve $ h_t(S_t') $ of the graphic.
\DibLocScalNomEtiq{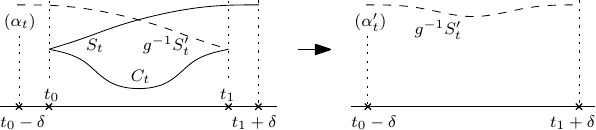}{h!}{1}{Elimination of a ``swallow-tail''.}{fig:SwallowTailGraphic}
}
\end{example}

The last tool that will be needed in order to prove the main theorem is lemma \ref{lem:ShiftLeft} below, which is again an adapted version of \cite[Lemma 2.5]{laudenbachReid} for closed $1$-forms. The remainder of this section supposes each closed $1 $-form $\alpha$ -- in a path or not -- having a distinguished primitive $h:\wt{M}\to\bR$.

\begin{definition}\label{df:AdaptCyl}
A cylinder adapted to $\al$ is a $\cC\cong\bD^{n-1}\x[-1,1]$ embedded into $\wt{M}$ verifying the next properties:
\begin{enumerate}
\item\label{prop:Inj} The restriction $\pi|_{\cC}$ is injective,
\item\label{prop:Caps} the \emph{caps} $\cC^\pm:=\bD^{n-1}\x\sing{\pm 1}$ are included on levels of $h$ and 
\item\label{prop:Lat} the restriction of $h$ to the \emph{lateral boundary} $\p_{\lat}\cC:=\p\bD^{n-1}\x [-1,1]$ has no critical point.
\end{enumerate}
We say moreover that such an adapted $\cC$ is \emph{semi-conjugated} to the function $F:\bR^n\to\bR$ if there exist an embedding $\vp:\cC\to\bR^n$ containing $0$ in its interior and $\psi\in\Diff(\bR)$ such that the following square commutes:
 \begin{equation}\label{eq:SemiConj}
\cuadr{\cC}{\bR}{\bR^{n}}{\bR}{h}{\vp}{\psi}{F} 
\end{equation}
\end{definition}

Fix an $i\in\sing{0,1,\ldots, n-1}$  and choose coordinates $x=(z,u)\in\bR^{n-1}\x\bR$ until the end of this section.

\begin{definition}\label{df:BirthPath}
A  path of closed $1$-forms is a \emph{birth path of index $i$ centred at time $t_0$} if there exists $\ve >0$ and a path of cylinders $\chext{\cC_t}{t}{t_0-\ve}{t_0+\ve}$ respectively adapted to $\al_t$ and semi-conjugated to  $F^{i,t_0}_{t}:\bR^n\to\bR$ where $\chext{F^{i,t_0}_t}{t}{t_0-\ve}{t_0+\ve}$ is a \emph{birth model of index $i$ centred at $t_0$}, namely:
\[
F^{i,t_0}_t(z,u):=\bparent{u^3-(t-t_0)u}+Q_i(z), \esp\text{where }Q_i\text{ is a quadratic form of index } i.
\]
We say that such a birth path starts at $(\al_{t_0-\ve},\cC_{t_0-\ve})$.
\end{definition}
Remark that $F^{i,t_0}_t$ has no critical point when $t<t_0$.

\begin{lemma}[{\sc Left-shifting of birth}]\label{lem:ShiftLeft}
Two statements hold:
\begin{enumerate}
\item Let  $\cC$ be a cylinder adapted to $\al$ and semi-conjugated to $F_0^{i,\ve}$.\\
Then there exists a birth path $\chext{\al_t}{t}{0}{2\ve}$ starting at $(\al,\cC)$.
\item\label{stat:Shift} Let  $\chext{\al_t}{t}{a}{b}$ be  a generic path with primitives $\chext{h_{t,0}}{t}{a}{b}$ and cylinders $\chext{\cC_{t,0}}{t}{a}{b}$ respectively adapted to $\al_t$ and semi-conjugated to $F_0^{i,\ve}$. Let $\chext{\be_{b,s}}{s}{0}{2\ve}$ be a birth path of index $i$ starting at $(\al_b,\cC_{b,0})$.\\
Then there exists a family $\chext{\be_{t,s}}{s}{0}{2\ve}$ of birth paths  of index $i$, parametrized by $t\in [a,b]$, extending the given birth path for $t=b$ and such that for every fixed $t\in [a,b]$, it starts at $(\al_t,\cC_{t,0})$.
\end{enumerate}
\end{lemma}
\begin{proof}
The first statement is easy to prove; we want to construct a family of primitives $\chext{h_t}{t}{0}{2\ve}$ starting at $h$, the given primitive of $\al$. Take first $\cC_t:=\cC$ and $\vp_t:=\vp$ for every $t\in [0,2\ve]$  and just choose $\chext{h_t}{t}{0}{2\ve}$ the functions making the diagram \eqref{eq:SemiConj} commute for $\psi=\Id$ and the model functions $\chext{F_t^{i,\ve}}{t}{0}{2\ve}$; then extend this path of functions $\pi_1M$-equivariantly over $\pi_1M\cdot\cC$. Of course, in order to have a honest path of primitives starting from $h$, we need the deformation to be stationary near the boundary of $\cC$; this can be obtained up to taking $\cC'$ a subcylinder of $\cC$ and a bump function $\omega$ with support contained on $\cC$, such that $\omega|_{\cC'}=1 $ and depending only on the $u$-coordinate of $\cC$. The diagram \eqref{eq:SemiConj} still commutes up to a suitable choice of $\chext{\vp_t}{t}{0}{2\ve}$ and rescaling $\chext{\psi_t}{t}{0}{2\ve}$ starting at $(\vp,\Id)$. The interested reader can compare this semi-conjugation method with that of \emph{elementary birth paths} in \cite[Ch.3, \S1]{cerfStrat}.\\
For the second statement, the square appearing on figure \ref{fig:TraceBirthShift} represents the two-parameter family $(\be_{t,s})$ that we want to construct: an extension of the given data which corresponds to the bottom and right-side edges of the mentioned square.\\
\DibLocScalNomEtiq{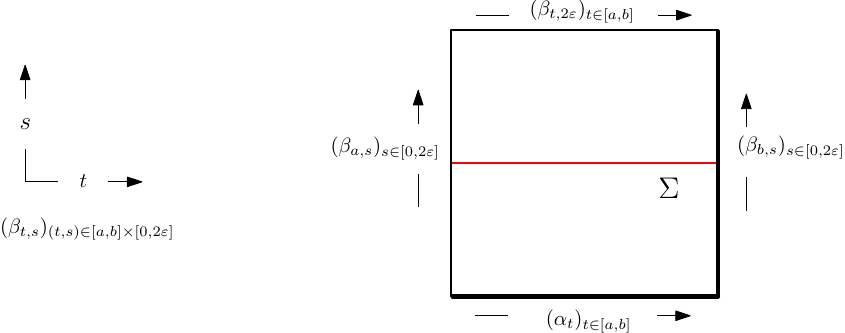}{h!}{0.7}{A path of birth paths. The birth parameters are denoted by $\Sigma$.}{fig:TraceBirthShift}

Remark that for every $t\in [a,b]$ the cylinder $\cC_{t,0}$ comes with a foliation induced by the levels of $\vp_{t,0}\co F^{i,\ve}_0$. Since this function has no critical point, the foliation is trivial and the leaves are all diffeomorphic to the lower level, which corresponds to the cap $\bD^{n-1}\x\sing{-1}$, the cylinders being adapted.\\
Take adapted subcylinders $\chext{\cC_{b,s}'}{s}{0}{2\ve}$ of those given by the birth path $\chext{\be_{b,s}}{s}{0}{2\ve}$ such that $\cC_{b,s}'$ contains on its interior the critical points of $F_s^{i,\ve}$ -- via the embeddings -- when $s\in [\ve,2\ve]$; by choosing a family of adapted subcylinders $\chext{\cC'_{t,0}\subset\cC_{t,0}}{t}{a}{b}$ ending at the selected $\cC'_{b,0}$, we construct a two-parameter family of diffeomorphisms $\bparent{\theta'_{t,s}:\cC'_{t,0}\to\cC'_{b,s}}_{(t,s)\in [a,b]\x[0,2\ve]}$ extending $\theta'_{b,0}:=\Id_{\cC'_{b,0}}$ and such that:
\begin{enumerate}
 \item for every $t\in [a,b]$, the map $\theta'_{t,0}:\cC'_{t,0}\to\cC'_{b,0}$ globally preserves the foliations and
 \item for every parameter value, $\theta'_{(t,s)}$ preserves the foliations near the boundary\footnote{Remark that for any $s\in [0,2\ve]$, the function $F_s^{i,\ve}$ has no critical point near $\p\cC'_{b,s}$.}.
 \end{enumerate}
Thanks to an extension of isotopies, this family can be promoted to a family of diffeomorphisms $\theta_{t,s}:\cC_{t,0}\to\cC_{b,s}$ preserving the foliation on the complementary part to the subcylinders. For every fixed $t\in [a,b]$, the maps $\chext{h_{t,s}}{s}{0}{2\ve}$ given by the composition $h_{b,s}\co\theta_{t,s}:\cC_{t,0}\to\bR$ start at $h_{t,0}$ and are semi-conjugated to the family $\chext{F_s^{i,\ve}}{s}{0}{2\ve}$ up to rescaling. Extend now $\pi_1M$-equivariantly the paths $\chext{h_{t,s}}{s}{0}{2\ve}$ over $\pi_1M\cdot\cC_{t,0}$ and by $h_{t,0}$ outside the mentioned union of cylinders. The two-parameter family $\bparent{h_{t,s}}_{(t,s)\in [a,b]\x[0,2\ve]}$ is made of primitives of the claimed path of birth paths $\chext{\chext{\be_{t,s}}{s}{0}{2\ve}}{t}{a}{b}$. 
\end{proof}

\section{One-parameter elimination of centers}\label{sect:Elim1}

This section is devoted to prove theorem \ref{th:ElimParam}. Let us denote by $\Sigma\subset (0,1)$ the set of \emph{stabilisation} -- birth and elimination -- times of a path  $\formes$ like in theorem \ref{th:ElimParam}. Denote by $ \nu\geq 0 $ the amount of times $ t $ where $ \al_t $ is of type birth of index $ 0 $. As the extremities have no center, we have necessarily $ \nu $ elimination times of index $ 0 $. If $ \nu=0 $ we are done because the extremities of $ \formes $ are supposed to have no center.

\begin{definition}\label{df:NormalPath}
A generic path of closed $1$-forms is said to be \emph{normal} if the set $\Sigma$ of stabilisation times is ordered as follows:
\begin{enumerate}
\item first births of index $i>0$, then births of index $0$ noted by  $\sing{a_1<\ldots <a_{\nu}}$, then eliminations of index $0$ noted by $\sing{b_{\nu}<\ldots <b_1}$, finally eliminations of index $i>0$. Moreover, for every $i=1,\ldots, \nu$:
\item the continuous path of sinks $(c_t)_t$ which starts at $t=a_i$ ends  at $t=b_i$.
\end{enumerate}
\end{definition}

\begin{proposition}\label{pro:Shift}
If $\dim(M)>1$, a birth time $t_0\in \Sigma$ of a generic path $\formes$ can be shifted to the left as much as we want. More precisely:\\
For every $t_0'\in (0,t_0)$ there exists an $\ve>0$ and a generic path $\chext{\al_t'}{t}{0}{1}$  such that:
\begin{enumerate}
\item $\al_t'=\al_t$ for every $t\notin [t_0'-\ve,t_0+\ve]$,
\item $\Sigma'=\bparent{\Sigma\prive\sing{t_0}}\cup\sing{t_0'}$ and 
\item the birth singularities of $\al_{t_0},\al'_{t_0'}$ have same index.
\end{enumerate}
\end{proposition}

Proposition \ref{pro:Shift}, together with a completely symmetric version for \emph{right-shifting of eliminations}, allow us to restrict the attention to normal paths in an evident manner.
\begin{proof}[Proof of proposition \ref{pro:Shift}]
Note by $p_{t_0}\in Z(\al_{t_0})$ the considered birth zero and choose a lifting $P_{t_0}$. If $i$ denotes the index of $p_{t_0}$, the birth model $\chext{F_t^{i,t_0}}{t}{t_0-\ve}{t_0+\ve} $ of index $i$ constitutes a path of primitives for some $\ve>0$ and some coordinates $(u,z)$ on a neighbourhood $N$ of $P_{t_0}$ centred on $0\in\bR^n$. Over $N$, the maps $\chext{F_t^{i,t_0}}{t}{t_0-\ve}{t_0+\ve}$ are semi-conjugated to themselves. Let $\de>0$ such that the critical values of $F_t^{i,t_0}$ are included on $[-\de,\de]$, and this for every $t\in [t_0-\ve,t_0+\ve]$. We easily find cylinders $ \chext{\cC_t}{t}{t_0-\ve}{t_0+\ve}$ such that for every $t\in [t_0-\ve,t_0+\ve]$:
\begin{enumerate}
\item   $\Crit(F_{t}^{i,t_0})\subset \Int{\cC_t}, \cC_t\subset F^{-1}\bparent{[-\de,\de]}\cap N$ and
\item properties \ref{prop:Caps} and \ref{prop:Lat} of definition \ref{df:AdaptCyl} are verified.
\end{enumerate}
Since $\de$ can be chosen as small as desired up to taking a smaller $\ve$, property \ref{prop:Inj} concerning the injectivity of the projection can be also achieved. The path $\chext{\al_t}{t}{t_0-\ve}{t_0+\ve}$ is so a birth path of index $i$. Of course we can suppose $\ve < t_0'$. Set $a:=t_0'-\ve, b:=t_0-\ve$.\\

Consider $\primits$ extending the choice of primitives that we have made near $t_0$ and the $1$-dimensional submanifold of $[0,1]\x \wt{M}$ given by:
\[\cT:=\ens{(t,Q_t)\in [0,1]\x \wt{M}}{Q_t\in\Crit(h_t)}.\]
The embedded curve $\cT$ cannot disconnect $[0,1]\x \wt{M}$ since its dimension  is $n+1>2$.  We choose two points on the connected manifold $\bparent{[0,1]\x \wt{M}}\prive\cT$:
\begin{itemize}
 \item $K:=(a,x_a)$ where $x_a$ is a regular point of $h_a$ and
 \item $L:=(b,x_b)$, such that $x_b\in\Int\cC_{b}$. 
 \end{itemize} 
These two points can be connected by an arc 
\begin{equation}\label{eq:ArcShift}
\aplic{\gamma}{[a,b]}{\bparent{[0,1]\x \wt{M}}\prive\cT}{t}{(t,x_t)};
\end{equation}
remark that any two functions without critical points are semi-conjugated. Each time that we have $x$ a regular point of a primitive $h$ of $\al$, we find a cylinder $\cC$ adapted to $\al$, containing $x$ in its interior and such that $h|_{\cC}$ has no critical points; this cylinder is therefore semi-conjugated to $F_0^{i,\ve}$, since the initial extremity of a birth model of index $i$ has no critical points. We can therefore choose a continuous family of cylinders $\chext{\cC_{t,0}}{t}{a}{b}$ along $\gamma$, respectively adapted to $\al_t$ and being semi-conjugated to $F_0^{i,\ve}$. Apply then statement \ref{stat:Shift} of lemma \ref{lem:ShiftLeft} to the next data:
\begin{itemize}
\item the path $\chext{\al_t,\cC_{t,0}}{t}{a}{b}$ and
\item the birth path given by $\chext{\be_{b,s}}{s}{0}{2\ve}$ where $\be_{b,s}:=\al_{b+s}$ for every $s\in[0,2\ve]$.
\end{itemize}
We obtain a path of birth paths $\chext{\be_{t,s}}{s}{0}{2\ve}$ depending on $t\in [a,b]$, each starting from $\al_t$. Consider then the one-parameter family given by: \[\al'_t=\sistIVU{\al_t}{\text{if } t\in [0,a]}{\be_{a,t-a}}{\text{if } t\in [a,a+2\ve]}{\be_{t-2\ve,2\ve}}{\text{if } t\in [a+2\ve,b+2\ve]}{\al_t}{\text{if } t\in [b+2\ve,1]}. \] 
This new path is homotopic, relative to its extremities, to the original one: the homotopy is stationary for $t\notin [a,b+2\ve]$ and $\chext{\al_t'}{t}{a}{b+2\ve}$ is nothing but the left-side and top edges of the square depicting the two-parameter family $(\be_{t,s})$ on figure \ref{fig:TraceBirthShift}. The path $\chext{\al_t'}{t}{a}{b+2\ve}$ clearly crosses the birth stratum at time $a+\ve=t_0'$ while $b+\ve=t_0$ is no more a birth time. We find an explicit homotopy of paths by sliding the birth path to the left as figure \ref{fig:BirthShift} suggests.
\DibLocScalNomEtiq{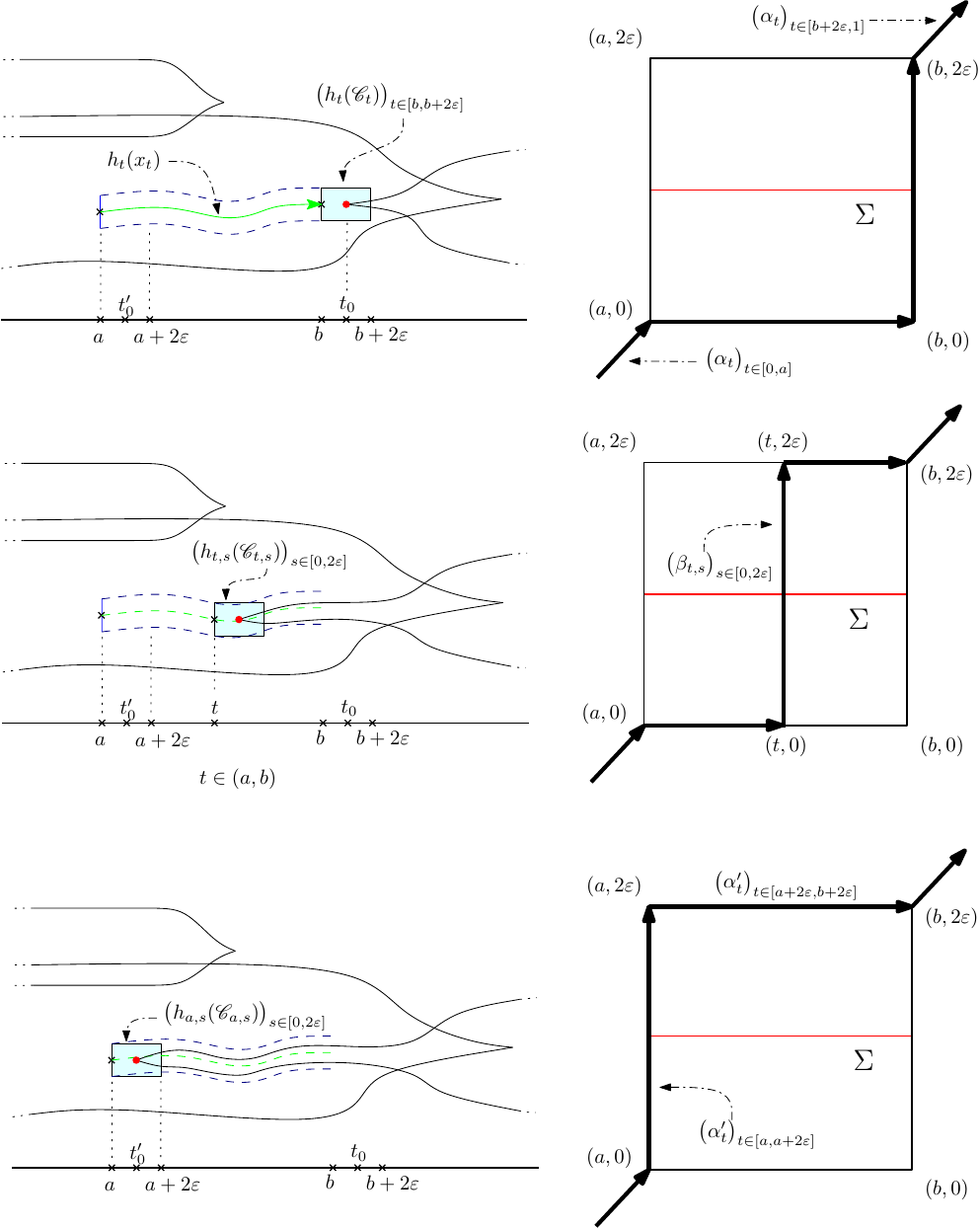}{h!}{.625}{Effect of a birth shift on the Cerf-Novikov graphic.}{fig:BirthShift} 
 
\end{proof}

From now on, we concentrate on eliminating the more internal path of sinks $\chext{c_t}{t}{t_0}{t_1}$ of a normal path $\formes$: the interval $ (t_0,t_1) $ is  made up of Morse times. We would be done if there was a path of saddles $\chext{s_t}{t}{t_0}{t_1}$ satisfying the hypothesis of lemma \ref{lem:lips} with respect to the specified sinks. We have defined connecting saddles in order to find such a candidate of saddles path $\chext{s_t}{t}{t_0}{t_1}$. The connecting saddle of $c_t$ is determined near $t_0$ and $t_1$ by genericness of $\formes$, as we have seen in the proof of the claim of lemma \ref{lem:lips}. However, lemma \ref{lem:ConnectSadd} does not apply to the Morse family $\chext{\al_t}{t}{t_0^+}{t_1^-}$ since we cannot equip it with pseudo-gradients $\chext{\xi_t}{t}{t_0^+}{t_1^-}$ being Morse-Smale at every time: the set of times $t$ where $\xi_t$ is Morse-Smale, is only a dense subset of $[t_0^+,t_1^-]$.\\
We want a more general condition than Morse-Smale's one, which ensures existence of connecting saddles for generic paths. The next proposition goes in this direction.

\begin{proposition}\label{pro:GenConSadd}
Let $ \formes $ be a generic path of Morse closed $ 1 $-forms together with a path $ \ch{c_t}{t} $ of sinks. For every generic path $ \champs $ of  pseudo-gradients of $ \formes $, consider $$ \ensdefTres{\cT}{t\in [0,1]}{c_t \text{ admits a connecting saddle } s_t }; $$ 

then the map $ \cT\flee{L}\bR^+ $ given by $t\mapsto L(\ell_t^+)$ is bounded by some constant $K>0$.
\end{proposition} 
\begin{proof}
Since $\champs  $ is generic, the set $ \cT $ is at least dense in $ [0,1] $ by lemma \ref{lem:ConnectSadd}. If the map $ L $ of the statement is not bounded, there exists $ (t_i)\subset \cT  $ converging to some $ \tau\notin \cT $ such that the sequence $ \bparent{L(t_i)}_{t_i\in\cT} $ diverges. Let us see that this can not happen.\\
Take $ \tau\notin \cT $ and $ \eta_{\tau} $, a Morse-Smale pseudo-gradient for $ \al_{\tau} $. Consider $ \Omega $ a small Morse neighbourhood of $ Z(\al_{\tau}) $. We can find an isotopy $ (\vp_t)_t $ of $ \varphi_{\tau}:=\Id_M $ such that $ \vp_t^*(\al_t)|_{\Omega}=\al_{\tau}|_{\Omega} $ for $ t $ near $ \tau $. Take a $ \de>0 $ small enough such that the condition $ \scal{ \vp_t^*(\al_t)}{\eta_{\tau}}|_{M\prive\Omega}<0 $ holds for every $ t $ such that $ \abs{t-\tau}<\de $. The vector field $ \eta_t:=(\phi_t)_*(\eta_{\tau}) $ is still a pseudo-gradient for every $  t\in [\tau-\de,\tau+\de] $ and clearly Morse-Smale since $ \phi_t $ is a diffeomorphism. In particular, the sinks $ c_t $ have connecting saddles relative to $ \chext{\eta_t}{t}{\tau-\de}{\tau+\de} $, whose connecting orbits verify $ L(\ell^+_{\eta_t})<K$ for every $ t\in[\tau-\de,\tau+\de] $ and some $ K>0 $ since $ \eta_t $ is Morse-Smale in these times.\\

Consider now any Morse $ \al $ together with a primitive $ h:\wt{M}\to\bR $ and $ \xi $ a Morse-Smale pseudo-gradient for $ \al $. If $ C $ is a lifting of a sink $ c $ of $ \al $, define for all $ a>h(C) $, the basin $ \Bas(a)\subset \wt{M} $ given by the closure of $ \asc{C}\cap h^{-1}\parent{(-\infty,a)} $. If $ s $ is a connecting saddle for $ c $ of connecting orbit $ \ell^+ $, call $ S $ the initial extremity of the lifting $ \wt{\ell^+} $ going to $ C $. Remark that for every $ \ve>0 $, any critical point of $ h $ of index $ 1 $ contained in $ \Bas(h(S)-\ve)  $ corresponds to a saddle in $ \cS_c^{\ker} $. In particular we have that $ h(S) $ coincides with the value \[ D:=\sup\ens{a\in\bR}{\al|_{\pi(\Bas (a))}\text{ is exact}}. \]
The latter value only depends on $ \al $, and the length of the connecting orbit for any Morse-Smale pseudo-gradient $ \xi $ coincides with $ D-h(C) $. We conclude that $ L(\ell^+_{\xi_t})=L(\ell^+_{\eta_t}) $ for all $ t\in [\tau-\de,\tau+\de] $ where $ \xi_t $ is Morse-Smale. 
\end{proof}

We need the notion of truncated unstable manifold associated with some pseudo-gradient $ \xi $: denote by $ \ga^p_x $ the portion of orbit going from $ p\in Z(\al) $ to some $ x\in\desc{p} $. For every $ K>0 $, we set \[ \descT{p}{K}:=\ens{x\in\desc{p}}{L(\ga^p_x)\leq K}. \]

\begin{definition}\label{df:KTransv}
Let $K>0$. We say that a pseudo-gradient $ \xi $ for a closed $1$-form $\al$ is $K$-transversal if \[
\descT{p_{t}}{K}\trans\asc{q_t},\text{ for every } p_t,q_t\in Z_*(\al_t). \]
\end{definition}

Reconsider the Morse path $\chext{\al_t}{t}{t_0^+}{t_1^-}$ as before proposition \ref{pro:GenConSadd}, which says that the length $ L $ of connecting orbits do not explode in this family. Take $K>K'>0$ where $ K $ is a bounding value for $L$. So, any sink $c_t$ in our Morse family, admits a connecting saddle on times when the equipping $\xi_t$ verifies the $K$-transversality property.\\
A generic equipment $\chext{\xi_t}{t}{t_0^+}{t_1^-}$ verifies the $K$-transversality property everywhere but in a finite set of times where a \emph{$K$-sliding} appears. Briefly explained, this results from the openness and density of the condition of $K$-transversality for pseudo-gradients: the truncated unstable manifolds admit compactifications $\adh{\descT{p_{t}}{K}}$ to submanifolds with corners of $M$ (as in \cite[Prop. 2.11.2]{latour}); the mentioned result is obtained by  
applying Thom's transversality theorem (see also \cite{hirsch}) to the finite collection of varying families $ \cup_{t}\adh{\descT{p_{t}}{K}},\cup_t\asc{q_t}$. More precisely, the transversality theorem is applied to a moving holed sphere $\Delta(P_t)$ of dimension $i-1$ with respect another fixed one $\Delta(gQ_t)$ of dimension $n-j-1$, both embedded in a $n-1$ dimensional manifold $L$, which corresponds to a level of $h_t$ between $gQ_t$ and $P_t$. The mentioned holed\footnote{Holes appear when the unstable manifold crosses a level containing a critical point of $h_t$ in its closure.} spheres are the intersection with $L$ of $ \adh{\descT{P_{t}}{K}}$ and $\asc{gQ_t}$ respectively, $i,j$ are respectively the index of $p_t,q_t$ and the possible values of $g$ are $u$-bounded by the truncation by $K>0$.\\ 
We deduce that $K$-transversality fails in a finite set of times where we find a unique orbit  $ \gli $ connecting two not necessarily different zeroes of same index $i$. This is properly proven in \cite[Prop. 2.2.33]{yoTesis}; a geometrically instructive description of the impact of this accident on the Novikov maps induced by $\xi_t$ can be found in \cite[Prop. 2.2.36]{yoTesis}, but consulting \cite[Th. 7.6 - Basis theorem]{milnorHcob} should suffice since it explains the analogue case of real-valued equipped Morse functions.\\

 The property of $ K $-transversality is so enough to ensure the existence of connecting saddles, while $ 0 $-excellence ensures uniqueness. But, for our generic $\chext{\al_t}{t}{t_0^+}{t_1^-}$,  $0$-excellence also holds everywhere but in a finite set of times, that we call \emph{competition} times. This comes from the work of  \cite{cerfStrat}: a generic path of functions $ f_t:V\to\bR $ from a compact manifold $ V $ has a finite set of times $ \tau_i $ where two critical points $ p_{\tau_i},q_{\tau_i} $ cross their critical value. This holds so for the path $ h_t|_V $ where $ \chext{h_t}{t}{t_0^+}{t_1^-} $ are primitives of $ \chext{\pi^*\al_t}{t}{t_0^+}{t_1^-} $ and $ V $ is a compact neighbourhood containing $ \bigcup_{t\in[t_0^+,t_1^-]}\parent{\asc{C_t}\cap h^{-1}([h_t(C_t),h_t(C_t)+K]} $ for a continuous lifting $\chext{C_t}{t}{t_0^+}{t_1^-}$ of the sinks.\\  
 
In this setting, call $ \cR $ the \emph{finite set} of times where $ K $-transversality of $ \xi_t $ or $ 0 $-excellence of $ \al_t $  fails. The map $[t_0^+,t_1^-]\prive\cR\flee{s}M$ defined by $ t\mapsto s_t $, where $s_t$ is the unique connecting saddle $ s_t $ for $ c_t $, is continuous. Let us study what happens near each $\tau\in\cR$.\\

\underline{\nth{1} problem - {\sc Competition:}}\, $\al_{\tau}$ is not $0$-excellent for $\xi_{\tau}$. We call $\tau$ a \emph{competition time}, where two saddles $ s^1_{\tau},s^2_{\tau} $ compete to be the connecting saddle of $ c_{\tau} $. The map $ s $ presents a discontinuity as figure \ref{fig:Competition} suggests.

\DibLocScalNomEtiq{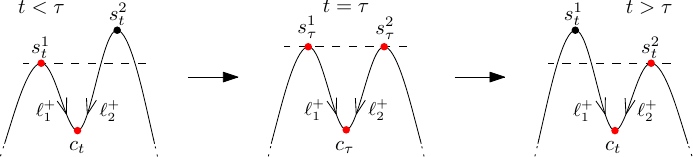}{h!}{1}{Two normal saddles competing on $ t=\tau $.}{fig:Competition}

We can compare competition times with the fact that a generic path of functions $ f_t:V\to\bR $ from a compact manifold $ V $ has a finite set of times $ \tau_i $ where two critical points $ p_{\tau_i},q_{\tau_i} $ cross their critical value (see \cite{cerfStrat}). This holds so for the path $ h_t|_V $ where $ \primits $ are primitives of $ \ch{\pi^*\al_t}{t} $ and $ V $ is a compact neighbourhood containing $ \bigcup_{t\in[t_0^+,t_1^-]}\parent{\asc{C_t}\cap h^{-1}([h_t(C_t),h_t(C_t)+K]} $ for a continuous lifting $\chext{C_t}{t}{t_0^+}{t_1^-}$ of the sinks.\\

\underline{\nth{2} problem: - {\sc $K$-sliding:}} As we have mentioned, in these times $\xi_{\tau}$ has a unique orbit  $ \gli \in\cL(s_{\tau},s'_{\tau})$ connecting two not necessarily different zeroes of same index $j$. The only case which affects connecting saddles is that of $j=1$ and $s_t$ being the connecting saddle for $c_t$ on times $t<\tau$.\\
~\newline
\begin{minipage}{0.6\textwidth}
This forces the saddle $s_{\tau}'$ to belong to $\cS_{c_\tau}^{\ker}$: the accident does not concern $\desc{s_t'}$ and $s_t'$ conserves its type for $t$ near $\tau$; if $s_{\tau}'$ is a normal saddle, $s_t$ cannot be the connecting saddle for $c_t$ when $t<\tau$ because the connecting orbit $(\ell^+_t)'$ of $s_t'$  would be shorter than the connecting orbit $\ell^+_t$ of $s_t$.\\
The connecting orbits $ (\ell_t^+)$  converge for $t\flee{t<\tau} \tau$ to a broken orbit $\gli*\ell_{\tau}' \in \cL(s_{\tau},s'_{\tau})*\cL(s'_{\tau},c_{\tau})$ as the figure \ref{fig:Vesube} suggests.\\
In particular, the enwrapments of the connecting orbits $ \ell_t^+ $ for times just before and after $ t=\tau $ are related by some $ g\in\ker(u) $: the length of the connecting orbit $ \ell^+_{t} $ presents an avoidable discontinuity at $ t=\tau $.
\end{minipage}
\hspace{5pt}
\vrule
\begin{minipage}{0.35\textwidth}
\centering
\includegraphics[scale=1]{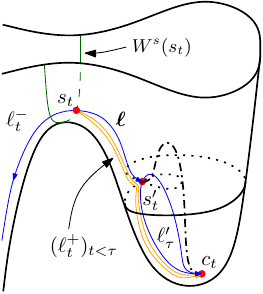}
\captionof{figure}{$K$-sliding time.}
\label{fig:Vesube}
\end{minipage}\\
\newline

We describe now a local modification of a generic path which allows one to transmute a $ K $-sliding time into a pair of competition times.

\begin{proposition}\label{pro:KSlideToCompet}
Let $ \ch{s_t,c_t,\ell_t^+}{t} $ be as the above description, associated with a path of Morse closed $ 1 $-forms $ \formes $ provided with a generic equipment $ \champs $ with no competition times and only one $ K $-sliding time at $ t=\tau $. There exists a deformation to a generic couple $ \ch{\al_t',\xi_t'}{t} $ such that:
\begin{enumerate}
\item nothing has changed in the complementary  of some interval $ (t_0,t_1) $ containing $ \tau $,
\item the only accidents of the equipment $ \chext{\xi_t'}{t}{t_0}{t_1} $ are two competition times $ \tau^-<\tau^+ $, 
\item the sinks of $ \al_t$ and $\al_t' $ are the same for every time $ t $.
\end{enumerate}
\end{proposition}
\begin{proof}
Choose initial liftings $ B_*(\al_0) $ of the zeroes and take $ B_*(\al_t) $ the continuous path of liftings associated with it. We denote by $ \ch{C_t}{t} $ the lifting of the sinks $ \ch{c_t}{t} $ containing $ c_{\tau} $, the sink involved with the $ K $-sliding phenomenon. The stated deformation is going to produce the local changes in the Cerf-Novikov graphic as it is depicted in figure \ref{fig:SlideToCros}.

\DibLocScalNomEtiq{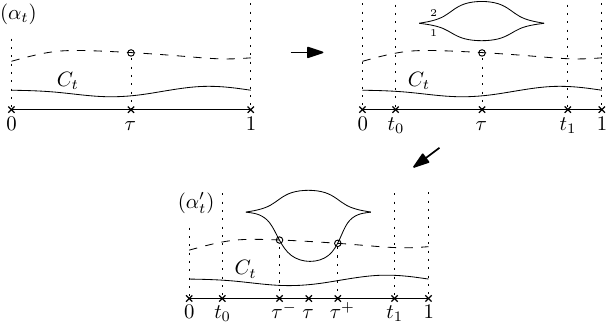}{h!}{1}{Changing a $ K $-sliding into two competitions.}{fig:SlideToCros}
The connecting orbits $ \ell_t^+ $ lift uniquely to orbits $ \wt{\ell_t^+} $ going to $ C_t $, and starting from $ g^{-1}S_t  $ where $ g $ is the enwrapment of $ \ell_t $ and $ S_t $ is the chosen lifting of the connecting saddle $ s_t $ of $ c_t $; the dashed line represents the values $ h_t(g^{-1}S_t) $, a translated curve of the graphic.\\

There exists an $ \ve >0 $ small enough such that the intersection of $ \asc{S_t} $ with $ L_t $, the level of $ h_t(S_t)+\ve $, is a $ (n-2) $-sphere for all $ t $ near $ \tau $, which we denote by $ \bS_t$. Take $ K_t $ a relatively compact open neighbourhood of $ \bS_t $ in $ \wt{M} $ such that $ \pi |_{\adh{K_t}} $ is injective.\\
For every $ t $ in an interval $ [t_0,t_1] $ containing $ \tau $, choose an arc $ I_t:[0,1]\to L_t\cap K_t $ intersecting $ \bS_t $ transversely only once at time $ \theta=\frac{1}{2} $. By the hypothesis on the equipment $ \champs $, one of the connected components of $ I_t\bparent{[0,1]\prive\sing{\frac{1}{2}}} $, say $ I_t^+:=I_t\parent{(\frac{1}{2},1]} $, is entirely contained in $ \asc{gC_t}\cup\asc{g'S_t'} $, where $ g' $ is the enwrapment of the accidental orbit $ \gli\in \cL(s_{\tau},s'_{\tau}) $. These arcs can be chosen in order to have $ I_t^+\trans\asc{g'S_t'} $; in particular, the extremities $ I_t(1) $ can be taken into $ \asc{gC_t} $ for every $t\in [t_0,t_1]$. The other component $I_t^- $ verifies 
\[
I_t^- \cap\bigcup_{u(k)\geq 0}\asc{kgC_t}=\V :
\]
if there was not the case, $ S_t $ would be in $ \adh{\asc{kgC_t}} $ leading to a contradiction with the fact that $ s_t $ is a connecting saddle for $ c_t $ whose connecting orbit has enwrapment $ g $.
Again by a transversality argument together with \eqref{eq:PairConnecting}, we can take the other extremities going to a sink for every $t\in [t_0,t_1]$; more precisely:
\begin{equation}\label{eq:PairConnecting}
I_t(0)\in \asc{k'gC_t'}\esp\text{where}\esp u(k')<0\esp\text{or}\esp\pi(C_t')\neq \pi(C_t).
\end{equation}

For every $ t \in [t_0,t_1] $, we obtain new primitives $ \chext{h_t'}{t}{t_0}{t_1} $ by modifying $ \pi_1M $-equivariantly the initial $ h_t $ in $ K_t\cap h_t^{-1}\bparent{(h_t(S_t),+\infty)}  $ by introducing a cancelling pair of critical points $ S_t'',R_t $ of respective index $ 1,2 $. The new generic family of pseudo-gradients, which only differs from the original on $ \pi(K_t) $, can be chosen such that $ \desc{S_t''}\cap L_t=\sing{I_t(0),I_t(1)} $ and $\desc{R_t}\cap L_t=I_t\bparent{(0,1)}  $: a new pair of zeroes of index $ 1,2 $ appears now in times $ t\in(t_0+\de,t_1-\de) $ for a small $ \de $. For the new birth and elimination times $t=t_0+\de,t_1-\de$, the unstable manifold of $R_t=S_t''$ is a half $2$-disk which intersects $L_t$ precisely at $I_t\bparent{[0,1]}$. The associated Cerf-Novikov looks as in the second picture of figure \ref{fig:SlideToCros}. From genericness of $ \chext{\xi_t'}{t}{t_0}{t_1} $ together with \eqref{eq:PairConnecting}, we deduce that the new saddles $ s''_t:=\pi(S_t'') $ belong indeed to $ \cS_{c_t}^n $, and this for every $ t\in (t_0,t_1) $.\\

The separatrix of $ S_t'' $ passing through $I_t(1)$ evidently crosses the level of $ S_t $ since it goes to $gC_t$ for every $ t\in (t_0,t_1) $; the other separatrix of $ S_t'' $, passing through $I_t(0)$ goes to $k'gC_t'$ and \eqref{eq:PairConnecting} allows us to decrease the value of $k'gC_t'$ under the level of $S_t$ in the case\footnote{The other case says that $k'gC_t'=k'gC_t$ is already under $gC_t$.} $\pi(C_t')\neq\pi(C_t)$ by applying lemma \ref{lem:Arrang} with parameters. Since $C_t'$ is a sink, we can decrease its value even more: below the level of $gC_t$, this will be important in the Case $2$ of the proof of theorem \ref{th:ElimParam}. We can so apply the rearrangement lemma \ref{lem:Arrang} to the one-parameter family of $ S_t'' $ to continuously decrease the value of $ S_t'' $ under $ h_t(S_t) $ for $ t $ near $ \tau $. The saddle $ s_t'' $ becomes thus the connecting saddle for $ c_t $ into the interval $ (\tau^-,\tau^+) $ of times $ t $ where $ h_t'(S_t'')<h_t'(S_t) $. Times $ t=\tau^-,\tau^+ $ corresponds to competition times between $ s_t $ and $ s_t'' $. 
\end{proof}

We prove now our main result, which is theorem \ref{th:ElimParam} announced in the introduction. 

\begin{proof}[\textbf{Proof of theorem \ref{th:ElimParam}}]
We only eliminate the sinks; the sources can be treated similarly.\\
As we have argued in the beginning of this section, we can suppose that $\formes $ is normal. Denote by $ a:=a_{\nu} $ the last birth time and by $ b:=b_{\nu} $ the first elimination time of $ \formes $. We are going to produce a generic path $ \ch{\al_t^{(1)}}{t} $ such that:
\begin{enumerate}
\item it coincides with the original $\formes  $ outside of $ (a-\ve,b+\ve) $,
\item $ \ch{\al_t^{(1)}}{t} $ has $ \nu-1 $ birth times of index $ 0 $.
\end{enumerate}
The path $ \ch{\al_t^{(1)}}{t} $ clearly share extremities with $ \formes $. Iterating the construction $ \nu $ times, we obtain a generic $ \ch{\al_t^{(\nu)}}{t} $ which is the announced $\ch{\be_t}{t} $.\\

Choose continuous liftings $ B_*(\al_t) $ and primitives $ \primits $ of $ \formes $. Consider the path of local minima of $ \primits $ given by $ \chext{C_t}{t}{a}{b} $, the lifting of the corresponding path of sinks $ \chext{c_t}{t}{a}{b} $. Take $ \champs $ a generic equipment for $ \formes $. We replace the finite amount of $ K $-sliding times in the interval $ [a,b] $ concerning our sinks by twice competition times by introducing trivial pairs of index $ (1,2) $ as in proposition \ref{pro:KSlideToCompet}. The dimension hypothesis is used here: the new zeroes of index $ 2 $ are not sources because $ n\geq 3 $. Remark that after this operation, our path -- still denoted $\formes$ -- is no more normal: the Cerf-Novikov graphic over the interval  $[a,b]$ contains finitely many times the last picture of figure \ref{fig:SlideToCros}, each of those contributing with four accidents. Let us denote by $ \sing{t_i}_{i=1}^r\subset (a,b) $ the union of the finite set of competition times related to $ \chext{c_t}{t}{a}{b} $ together with the times of the new births and eliminations of index $1$.\\

Suppose first that the simplest case, $ r=0 $ holds. Since there is not any competition, $K$-sliding or stabilisation time between $ a $ and $ b $, we have a continuous path of connecting saddles $ \chext{s_t}{t}{a}{b} $ for our sinks. Consider the set of times $\Delta\subset (a,b)$ where the family of non-connecting separatrices $\chext{\ell_t^-}{t}{a}{b}$ do not verify $L(\ell_t^-)> L(\ell_t^+)$. For any $t\in \Delta, s_t$ is the connecting saddle for $c_t$ and  $\ell_t^-$ has to go to a sink $c_t'\neq c_t$; we can so apply the one-parameter version of the rearrangement lemma \ref{lem:Arrang} to $c_t'$ for $t$ in any interval containing $\Delta$ in order to have $L(\ell_t^-)>L(\ell_t^+)$ for all $t\in (a,b)$. The hypothesis of the elementary lips lemma for sinks \ref{lem:lips} respectively to the couples $\chext{s_t,c_t}{t}{a}{b}$ is now verified and we obtain the claimed family $ \ch{\al_t^{(1)}}{t} $ by applying it; the graphic changes as in figure \ref{fig:LipsGraphic}.\\

Let us treat now the case $r=1$. It means that there is only one competition at $t=t_1$: if $t_1$ was a birth time, necessarily $r\geq 4$. Since there is no accident in $ (a,t_1) $, the saddle $ s_t $ is \emph{the} connecting saddle of $ c_t $ for every $t\in (a,t_1)$. In $ t=t_1 $, a connecting saddle $ s'_{t_1}\neq s_{t_1}  $ competes with $ s_{t_1} $ and  $s_t'$ is then \emph{the} connecting saddle for $ c_t $ when  $t\in (t_1,b)$. Since there is no accident in the interval $(a,b)$ other than the competition at $t=t_1$, the continuous path of saddles $ (s_t)_t $ starting at $ s_a=c_a $ is defined until $ t=b+\ve$ for some small $\ve>0 $: the saddles $s_t$ and $s_t'$ are different for every $t\in (a,b)$ and $s_t$ does not cancel at $t=b$. In the same way, the continuous family $ (s'_t) $ is defined from $ a-\ve $. 

Competitions do not change the enwrapment of orbits. In particular, we can suppose that the length of the non-connecting orbit of $ \desc{s_t} $ is bigger than $ L(\ell^+_t) $ for every time $t$ since it is the case for times near $ a $. The same reasoning applies to the non-connecting orbit of $ \desc{s'_t} $. We can apply the elementary swallow-tail lemma for sinks \ref{lem:SwallowTail} to $ \ch{\al_t}{t} $, relative to the zeroes $ \chext{s_t,s_t',c_t}{t}{a}{b} $. The graphic has changed as in figure \ref{fig:SwallowTailGraphic} and we obtain the claimed  $ \ch{\al_t^{(1)}}{t} $.\\

The rest of the proof consists in obtaining a path $\ch{\al_t''}{t}$ with same extremities and births of index $0$ than the previous one but where the value of $r$ has been lowered. Three different cases can occur:\\

\underline{Case 1.A: $t_1,t_2$ are competition times.} We perform a preliminary modification to $ \formes $ as figure \ref{fig:ElimPrelimCompV2} suggests.
\DibLocScalNomEtiq{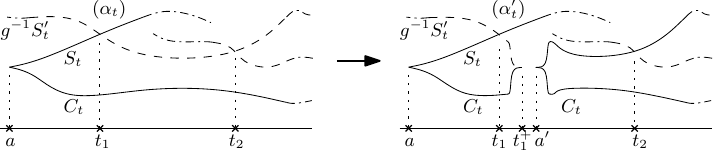}{h!}{1}{Case 1.A - Effect in the Cerf-Novikov graphic of the preliminary modification on $ \formes $.}{fig:ElimPrelimCompV2}

The modified $ \ch{\al_t'}{t} $ is a generic path of $ 1 $-forms obtained by inserting a loop $ (\ga^{1}_t)$ in $ \formes $, based at time $t_1+\ve $ for an $ \ve>0 $ small enough. This loop is constructed by following forwards then backwards the path realizing the elimination of $ s_{t_1+\ve} $ with $ c_{t_1+\ve} $ given by lemma \ref{lem:Elim0}. The new birth and elimination times of the considered path of sinks $ (c_t)_t $ are denoted by: $ t_1<t_1^+<a'<t_2 $.\\
~\newline
\begin{minipage}{0.5\textwidth}
Remark that the zeroes $ \chext{s_t,s_t',c_t}{t}{a}{t_1^+} $ of the    path $\ch{\al_t'}{t}$ are as in the situation $r=1$ that we have just described. We apply lemma \ref{lem:SwallowTail} to the mentioned zeroes to obtain $ \ch{\al_t''}{t} $, whose graphic is depicted in figure \ref{fig:ElimCompFinal}: we have reduced by one the number of competitions.
\end{minipage}
\hspace{3pt}
\vrule
\hspace{3pt}
\begin{minipage}{0.45\textwidth}
\centering
\includegraphics[scale=1]{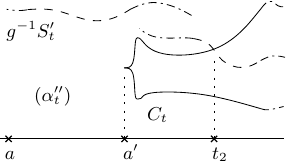}
\captionof{figure}{One competition less.}
\label{fig:ElimCompFinal}
\end{minipage}\\
\newline

\underline{Case 1.B: $t_1 $ is competition, $t_2$ is birth.} This case can be treated similarly to the latter one: the birth singularity comes from the application of proposition \ref{pro:KSlideToCompet} and the graphic of the path that we are considering is as in the first picture of figure \ref{fig:Case1B}. By inserting a loop at $t_1+\ve$ in the same way that we have done in the case 1.A, we manage to isolate a situation verifying the hypothesis of lemma \ref{lem:SwallowTail} near the competition at $t_1$ as in the second picture of figure \ref{fig:Case1B}; we apply lemma \ref{lem:SwallowTail} to obtain the claimed path $ \ch{\al_t''}{t} $ having one competition less.
\DibLocScalNomEtiq{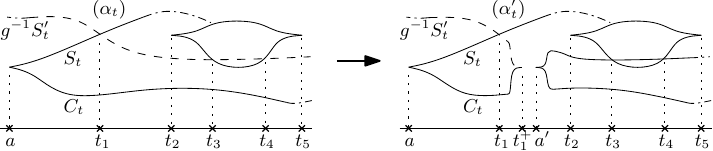}{h!}{1}{Case 1.B - First modification on $ \formes $.}{fig:Case1B}

\underline{Case 2: $t_1 $ is birth.}\label{case:2} The Cerf-Novikov graphic is thus as in the first picture of figure \ref{fig:Case2}, where $\chext{S_t'}{t}{t_1}{t_4}$ represent the chosen liftings of the family of saddles coming from the use of proposition \ref{pro:KSlideToCompet}. Remark that the saddles $s_t$ appearing at $t=a$ are the connecting saddles for the considered family of sinks before $t_2$ and after $t_3$. We start modifying the path by inserting two loops based at times $t_2+\ve $ and $t_3+\ve$ realizing forwards then backwards the elimination of $ s'_{t_2+\ve} $ with $ c_{t_2+\ve} $ and that of $ s_{t_3+\ve} $ with $ c_{t_3+\ve} $ respectively to obtain a situation similar to that of the second picture of figure \ref{fig:Case2} with two swallowtails around $t=t_2,t_3$ respectively. 
\DibLocScalNomEtiq{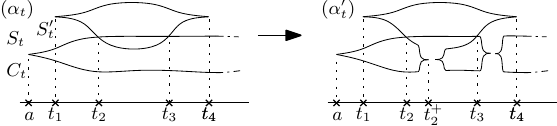}{h!}{1}{Case 2 - First modification on $ \formes $.}{fig:Case2}

As we pointed on proposition \ref{pro:KSlideToCompet}, the separatrices of the inserted saddles $S_t'$ not going to $C_t$ can be supposed to end on a sink $C_t'$ lower than $C_t$, and this for every $t\in [t_1,t_4]$. In particular, the swallowtail concerning the competition at $t=t_3$ can be eliminated as usually. Remark that the situation near the competition at $t=t_2$ does not allow one to apply lemma \ref{lem:SwallowTail} since the sinks $s'_t$ do not exist on the interval $[a,t_1]$.\\

Choose $t_1'<a$. By carefully applying proposition \ref{pro:Shift}, we can shift the index $1$ birth point $S_{t_1}$ to $t=t_1'$ in such a way that the relative position of $\desc{S_t'}$ and $\asc{C_t}$ is preserved for intermediate values of $t$: call $\chext{\cC_t}{t}{t_1-\ve}{t_1+\ve}$ a choice of adapted cylinders of the birth path of index $1$ such that $S'_{t_1}\in\cC_{t_1}$. Remark that $\asc{S_t'}$ is contained on $\adh{\asc{C_t}}\cap\adh{\asc{C'_t}}$  for every $t\in (t_1,t_1+\ve]$ and then, the set
\[
\Delta_t:=\adh{\asc{C_t}}\cup\adh{\asc{C'_t}}
\]
is a connected codimension zero set of $\wt{M}$, also for $t=t_1$. This remains true on the interval $[a,t_1)$, since the open interval does not contain any accident and $\Delta_a$ absorbs $\asc{C_a=S_a}$. Define $\Delta_t:=\wt{M}$ for every $t<a$.\\
Clearly, the set $\Delta:=\cup_{t\in{[0,t_1+\ve]}}\Delta_t$ is a codimension zero connected set of $[0,1]\x\wt{M}$. The $1$-dimensional submanifold $\cT$ appearing on the proof of proposition \ref{pro:Shift} does not disconnect $\Delta$ since $n\geq 3$.\\

Call $a':=t_1'-\ve, b':=t_1-\ve$; up to taking a smaller $\ve$, we can suppose that the interior of the cylinder $\cC_{b'}$ intersects $\Delta_{b'}$. Construct an arc  $\gamma:[a', b']\to\Delta\prive\cT $ as in \eqref{eq:ArcShift} where $L:=(b',x_{b'})$ is taken such that $x_{b'}\in\Delta_{b'}\cap\Int\cC_{b'}$. This path $\gamma$ allows one again to take a continuous path of cylinders parametrized by $[a', b']$ and ending on $\cC_{b'}$, whose union serves as the support of the claimed birth shift preserving the relative position of the mentioned invariant manifolds. After realizing the shift, the effect of the latter modifications is a generic path whose Cerf-Novikov graphic is like the first picture of figure \ref{fig:Case2Modif}.
\DibLocScalNomEtiq{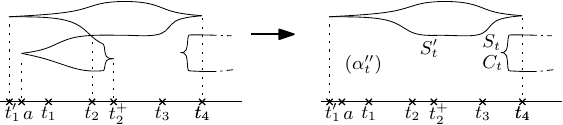}{h!}{1}{Case 2 - Second modification on $ \formes $.}{fig:Case2Modif}

Thanks to the way in which we have performed the birth shift, the hypotheses of lemma \ref{lem:SwallowTail} are verified for some small interval containing $[a,t_2^+]$: we apply it to obtain the claimed path  $ \ch{\al_t''}{t} $. Remark that we have not modified the path near $t_4$; the connecting saddles for the considered family of sinks is thus $s_t$ near $t_4$, as it was at the beginning of this case. Even if  $ \ch{\al_t''}{t} $ is not normal in this case, we can continue the reduction of singular times on the family of sinks $\chext{c_t}{t}{t_4}{b}$ since the elimination at $t_4$ does not create a discontinuity of connecting saddles of the considered sinks.\\

In any case, we find the path $\ch{\al_t^{(1)}}{t}$ with one birth of index $0$ less than the original one. Before treating the next path of sinks, take the precaution of shifting the eventual new index $1$ stabilisation times in order to convert $\ch{\al_t^{(1)}}{t}$ again into a normal path.
\end{proof}
\vspace{1cm}
\textbf{\textsc{Acknowledgements:}} I am indebted by the continuous support I have received from Fran\c{c}ois Laudenbach as well as for sharing with me his valuable point of view of mathematics.\\
I am strongly grateful to Jean-Claude Sikorav for the comments that he made on a previous version of the paper, which have substantially improved the author's understanding about the contributions that have been done to Novikov theory, as well as the accuracy of some citations on the present paper.\\
I greatly appreciate the comments from the anonymous referee, specially for having pointed out an accident that was not treated in the first version of the paper. The first version was partially written at the University of Nantes under the financial support of the \lq\lq ERC GEODYCON project" (eOTP: 12GEODYC). The revisions and improvements which have led to this final version were made during a \lq\lq JSPS Postdoctoral Fellowship" under remarkably good research conditions at The University of Tokyo.

\bibliography{Biblio}{}
\bibliographystyle{alpha}
\vspace{1cm}
\textsc{C. Moraga Ferr\'{a}ndiz:} \newline
Graduate School of Mathematical Sciences, the University of Tokyo\newline
3-8-1 Komaba Meguro-ku, Tokyo 153-8914, Japan\newline
e-mail:  carlos@ms.u-tokyo.ac.jp

\end{document}